\newtheorem{theorem}{Theorem}[section]
\newtheorem{lemma}[theorem]{Lemma}
\newtheorem{proposition}[theorem]{Proposition}
\theoremstyle{definition}
\newtheorem{definition}[theorem]{Definition}
\theoremstyle{remark}
\newtheorem{remark}[theorem]{Remark}
\newcommand{\EE}{\mathcal{E}}
\newcommand{\TEE}{\tilde{\mathcal{E}}}
\newcommand{\EIT}{\mathcal{E}}
\newcommand{\QQ}{\mathbb{Q}}
\newcommand{\NN}{\mathbb{N}}
\title{Tanaka-Ito $\alpha$-continued fractions and matching}
\author{
\textsc{Carlo Carminati} \\
\normalsize Dipartimento di Matematica, Universit\`a di Pisa, \\
\normalsize Largo Bruno Pontecorvo 5, 56127 Pisa, Italy \\
\normalsize \url{carlo.carminati@unipi.it}
\and 
\textsc{Niels Langeveld} \\
\normalsize Department of Mathematics, Leiden University, \\
\normalsize Niels Bohrweg 1, 2333CA Leiden, The Netherlands \\
\normalsize \url{n.d.s.langeveld@math.leidenuniv.nl}
\and 
\textsc{Wolfgang Steiner} \\
\normalsize Universit\'e de Paris, IRIF, CNRS, 75013 Paris, France \\
\normalsize \url{steiner@irif.fr}}
\begin{document}
\maketitle

\begin{abstract}
Two closely related families of $\alpha$-continued fractions were introduced in 1981: by Nakada on the one hand, by Tanaka and Ito on the other hand. 
The behavior of the entropy as a function of the parameter $\alpha$ has been studied extensively for Nakada's family, and several of the results have been obtained exploiting an algebraic feature called \emph{matching}.
In this article we show that matching occurs also for Tanaka-Ito $\alpha$-continued fractions, and that the parameter space is almost completely covered by matching intervals. Indeed, the set of parameters for which the matching condition does not hold, called bifurcation set, is a zero measure set (even if it has full Hausdorff dimension). 
This property is also shared by Nakada's $\alpha$-continued fractions, and yet there also are some substantial differences: not only does the bifurcation set for Tanaka-Ito continued fractions contain infinitely many rational values, it also contains  numbers with unbounded partial quotients.
\end{abstract}

\section{Introduction}
Several variants of the regular continued fraction (RCF) have been considered;
the most famous ones are the \emph{nearest integer} continued fraction (NICF) and the \emph{backward} continued fraction (BCF). Starting from the 80s, some attention has been devoted to families of continued fraction algorithms; even if different authors have focused on different families, one can describe most\footnote{Actually some authors, e.g.\ in~\cite{LM}, studied the so called \emph{folded algorithms} which are not of the type \eqref{eq:alphamap}, however from the metric viewpoint there is hardly any difference between the folded and the unfolded version; see \S~3.1 of \cite{BDV} for a discussion of this issue.} of these families using the same setting as follows.
Define $T_{\alpha}:[\alpha-1,\alpha]\rightarrow[\alpha-1,\alpha]$ by
\begin{equation}\label{eq:alphamap}
T_{\alpha}(x)= 
\begin{cases} S(x)-\left\lfloor S(x)+1-\alpha\right\rfloor & \text{for } x\neq0, \\
  0 & \text{for } x=0.
  \end{cases}
\end{equation} 
Different choices of~$S$ in formula \eqref{eq:alphamap} give rise to different generalizations of the classical continued fraction algorithms\footnote{The choice $S(x)=-\frac{1}{|x|}$ isomorphic to the case (N), up to exchanging $\alpha$ and $1-\alpha$.}
\begin{enumerate}
\item[(N)] for $S(x)=\frac{1}{|x|}$, one gets the $\alpha$-continued fractions first studied by Nakada \cite{N},
\item[(KU)] for $S(x)=-\frac{1}{x}$, one finds a family of $(a,b)$-continued fractions (with $b=\alpha=a+1$), which were first studied by Katok and Ugarcovici \cite{KU1},
\item[(TI)] for $S(x)=\frac{1}{x}$, one gets the $\alpha$-continued fractions first studied by Tanaka and Ito \cite{TI}. 
\end{enumerate}
In all the above three cases, for all $\alpha \in (0,1)$, the dynamical system defined by the map \eqref{eq:alphamap} admits an absolutely continuous invariant probability measure~$\mu_\alpha$ (see \cite{N}, \cite{KU3} and \cite{NS} respectively) allowing the study of the metric entropy $h_{\mu_\alpha}(T_\alpha)$. This determines the speed of convergence of the continued fraction algorithm on typical points. An issue which has been in the spotlight in recent years is the dependence of the entropy on the parameter~$\alpha$. The behavior of the entropy is by now quite well understood in case (N), which is by far the most studied \cite{N, CMM, LM, NN, CT1, KSS, CT2}. The same is true for the case (KU), which was first considered much more recently \cite{KU1, KU2, CIT}. 
However, for the case (TI)  the picture is still not complete, and quite a lot of time elapsed between the original results dating back to 1981 \cite{TI} and the recent paper \cite{NS}.

One common feature of all these families is the presence of a property called \emph{matching},\footnote{Definitions of matching vary slightly from article to article, and even the terminology may change: all the terms \emph{matching property}, \emph{cycle property}, \emph{synchronization property} are just different names for the same feature.} which affects both  the behavior of the entropy and the structure of the natural extension. A~parameter $\alpha\in [0,1]$ satisfies the \emph{matching condition} with \emph{matching exponents} $M,N$ if
\begin{equation}\label{eq:matching}
T_{\alpha}^M(\alpha-1)=T_{\alpha}^N(\alpha).
\end{equation}
Actually in all three cases (N), (KU) and (TI), a condition like \eqref{eq:matching} holds  on intervals with non-empty interior\footnote{The explanation for this surprising feature is that each matching interval actually relates to an algebraic identity; this fact will be implicit in our discussion, but details can be found in \cite{L}.}; thus what will be relevant is the definition of a \emph{matching interval}.

\begin{definition}[Matching]\label{def:match-int}
Let $J \subset [0,1]$ be a non-empty open interval. 
We say that $J$ is a \emph{matching interval} (with exponents $M,N$) if $T_{\alpha}^M(\alpha-1) = T_{\alpha}^N(\alpha)$ for all $\alpha \in J$, $T_{\alpha}^{M-1}(\alpha-1) \ne T_{\alpha}^{N-1}(\alpha)$ for almost all $\alpha \in J$, and $J$ is not contained in a larger open interval with these properties.
The difference $\Delta :=M-N$ is called \emph{matching index}.
\end{definition}

\begin{figure}[ht]
\centering
\includegraphics[scale=.6]{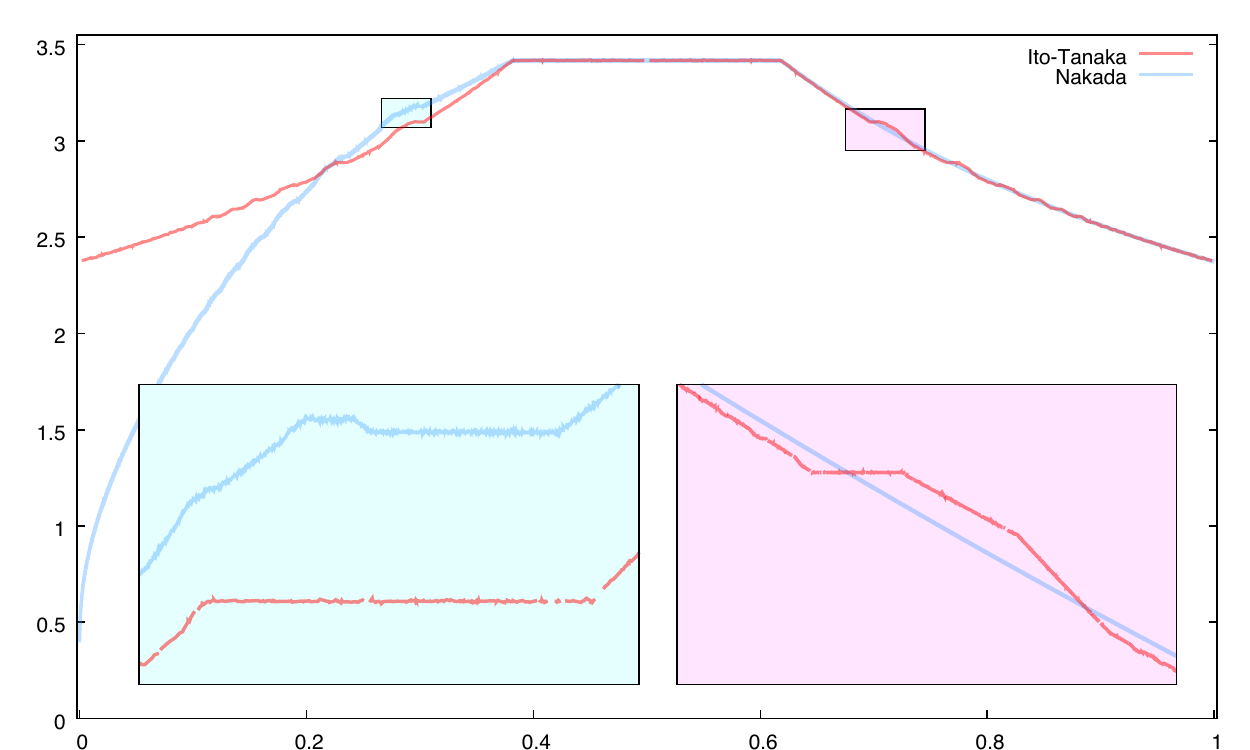}
\caption{Comparison between the entropy functions of the families  (TI) and~(N). The two entropy functions coincide on the interval $[1-g,g]$, and are very close on $[g,1]$ (and yet different, see zoom on the right). On the interval $[0,1-g]$ the entropy of the family (TI) seems monotone, but this is certainly not the case for the family (N) (see zoom on the left) due to the presence of matching intervals of all integer indexes.}\label{fig:comparison}
\end{figure}

Given any matching interval~$J$, one can prove that the behavior of the entropy function $\alpha \mapsto h_{\mu_\alpha}(T_\alpha)$ is strongly correlated to the matching index: if $M,N$ are the matching exponents on~$J$,  then the entropy function is constant when $M=N$, while it is decreasing when $N<M$ and increasing when $N>M$.
This relation was first discovered for the case (N) in the seminal paper \cite{NN}, and was later used in \cite{CIT} to prove a monotonicity result for the case~(KU) (where the property of matching had  already been detected by \cite{KU1} in connection with natural extensions);  for a proof covering all three families at once, see Theorem~3.2.8 in \cite{L}. 

In this paper we shall  focus our study on the matching property for the family~(TI).
Other aspects, such as the relation between the entropy and the natural extension of~$T_\alpha$ are studied in~\cite{NS}.  

We call \emph{matching set} the union of all matching intervals; its complement will be called \emph{bifurcation set} and will be denoted by~$\EE$. The following lemma shows that two matching intervals cannot overlap, and that the matching index is actually well defined.

\begin{lemma} \label{l:overlap}
Let $M, M', N, N'$ be such that $M-N \ne M'-N'$.
Then there are at most countably many $\alpha \in [0,1]$ such that $T_\alpha^M(\alpha-1) = T_\alpha^N(\alpha)$ and $T_\alpha^{M'}(\alpha-1) = T_\alpha^{N'}(\alpha)$.
\end{lemma}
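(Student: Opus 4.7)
The plan is to partition the parameter interval $[0,1]$, up to a countable set of boundary points, into countably many open intervals on which each iterate $T_\alpha^j(\alpha-1)$ and $T_\alpha^j(\alpha)$ (for $j$ at most $\max(M,M',N,N')$) becomes an explicit Möbius function of $\alpha$, and then to argue algebraically on each piece. The stratification is obtained by prescribing the first several digits of the $T_\alpha$-orbits of $\alpha-1$ and of $\alpha$; in case (TI) each step of $T_\alpha$ on a branch is $x\mapsto 1/x-a$, so fixing all these digits turns both $T_\alpha^j(\alpha-1)$ and $T_\alpha^j(\alpha)$ into Möbius expressions in $\alpha$ with integer coefficients. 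Each parameter set on which a prescribed initial digit pattern is realized is cut out by rational inequalities in $\alpha$, hence semi-algebraic and thus a finite union of intervals; summing over the countably many possible patterns yields countably many pieces.

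On each such interval $U$, the two matching equations, once denominators are cleared, become polynomial equations in $\alpha$ of degree at most two, so each either has finitely many roots in $U$ or vanishes identically on $U$. Summing the finite-root contributions over the countably many pieces gives a countable set, so the task reduces to excluding the case that both equations hold identically on some $U$. Suppose this happens. On $U$, the identity $T_\alpha^M(\alpha-1)=T_\alpha^N(\alpha)$ forces the orbits of $\alpha-1$ past step $M$ and of $\alpha$ past step $N$ to coincide, so their digits agree from then on. Applying $T_\alpha^{M'-M}$ (say WLOG $M\le M'$) to this identity and comparing with the second one yields the Möbius identity $T_\alpha^{N+M'-M}(\alpha)=T_\alpha^{N'}(\alpha)$ on $U$. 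Setting $k:=(N+M'-M)-N'=(M-N)-(M'-N')\ne 0$, this forces the Möbius map obtained by composing $|k|$ steps of $T_\alpha$ along the prescribed digits $a_{N'+1},\ldots,a_{N'+|k|}$ of $\alpha$'s orbit to act as the identity on $\alpha\in U$; two Möbius agreeing on an interval agree everywhere, so this composition is the identity Möbius on $\mathbb{R}\cup\{\infty\}$.

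But this would mean that $T_\alpha^{|k|}$ restricted to the cylinder in $[\alpha-1,\alpha]$ corresponding to those digits---an interval of positive length---equals the identity map. This contradicts the strict expansivity of the Tanaka--Ito map: since $|T_\alpha'(x)|=1/x^2>1$ on $[\alpha-1,\alpha]\setminus\{0\}\subset(-1,1)$, the derivative of $T_\alpha^{|k|}$ is strictly greater than $1$ throughout the cylinder, ruling out equality with the identity on any subinterval. I expect the principal technical obstacle to be the careful setup of the stratification: one must prescribe enough initial digits of both orbits to ensure that all four iterates are simultaneously Möbius on the same piece, and must verify that the two Möbius identities extracted from the matching equations can legitimately be composed to yield the derived identity on $U$. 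Once this bookkeeping is in place, the expansion argument closes the proof immediately.
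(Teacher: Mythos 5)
Your argument is correct in substance, but it follows a genuinely different route from the paper's. The paper's proof is a three-line reduction: chaining the two relations (after applying $T_\alpha^{N'-N}$, say, to the first) gives $T_\alpha^{M+N'-N}(\alpha-1)=T_\alpha^{M'}(\alpha-1)$ with $M+N'-N\ne M'$, so the $T_\alpha$-orbit of $\alpha-1$ is eventually periodic; hence $\alpha$ is rational or a quadratic irrational, and there are only countably many such parameters. You perform the same chaining step (on the orbit of $\alpha$ rather than of $\alpha-1$), but then, instead of invoking the classical ``eventually periodic expansion $\Rightarrow$ rational or quadratic'' fact, you stratify the parameter interval by the initial digit patterns of both orbits into countably many semi-algebraic pieces, observe that on each piece the matching relations are quadratic equations in $\alpha$, and rule out the degenerate case (both relations holding identically on a piece of positive length) by showing the period map would have to be the identity M\"obius transformation, contradicting expansivity since the genuine orbit points lie in $[\alpha-1,\alpha]\subset(-1,1)$. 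What your longer route buys is that it makes explicit precisely the point the one-line classical fact hides: the M\"obius map over one period cannot be the identity (abstractly, products of matrices $\bigl(\begin{smallmatrix}-a&1\\1&0\end{smallmatrix}\bigr)$ can equal $\pm I$, e.g.\ with alternating digits $\pm1$, so some dynamical input such as your derivative estimate, or the convergence $|q_{\alpha,n}|\to\infty$, is genuinely needed); what it costs is the bookkeeping of the stratification. A few small points to tidy up: your $k$ should be $(M'-N')-(M-N)$ (harmless, only $|k|\ne0$ matters); the claim that the relevant cylinder has positive length is neither justified nor needed---it suffices to evaluate the derivative of the period map at the orbit point $T_\alpha^{N'}(\alpha)$ itself, where the chain rule gives a product of factors $1/x_i^2>1$, contradicting derivative $1$ for the identity; and the degenerate parameters $\alpha\in\{0,1\}$ and those whose orbits hit $0$ (where the M\"obius formula for a step breaks down) should be discarded into the countable exceptional set at the outset.
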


\begin{proof}
Assume w.l.o.g.\ that $N' \ge N$. 
Then we have $T_\alpha^{M+N'-N}(\alpha-1) = T_\alpha^{N'}(\alpha) = T_\alpha^{M'}(\alpha-1)$.
Since $M-N \ne M'-N'$, this implies that $\alpha$ is a rational or quadratic number.
\end{proof}

Let us point out that Lemma \ref{l:overlap} applies to all the above three cases (TI), (KU) and (N). In fact, in all these three cases one can easily detect several matching intervals and  also other results are analogous under many aspects.

It is clear, from  the definition we chose,  that  matching  is an open condition. For the $\alpha$-continued fractions~(N), it is shown in~\cite{CT1} that matching holds almost everywhere. The same is true in the case (KU); see \cite{KU1, KU2, CIT}. In Section~2, we will show that this is also true for the $\alpha$-continued fractions of Tanaka and Ito. However, when we come to the bifurcation set, the situation is different. Not only do each of the three variants (N), (KU) and (TI) have a different bifurcation set but these bifurcation sets display quite a few differences. For instance, it is not difficult to show that the bifurcation set of the (N) case and the (KU) case both do not intersect $\QQ\cap(0,1)$ and are made of \emph{constant type} numbers (numbers for which the digits of the continued fraction expansion are bounded from above).
This is not the case for (TI): not only does the bifurcation set contain infinitely many rational values (such as $1/n$ for $n \geq 3$) but it also contains numbers with unbounded partial quotients.

In this paper we will focus on  the behavior of matching for Tanaka-Ito $\alpha$-continued fractions; in the following subsection we provide some background information for this particular case and we state our results. 

\subsection{Tanaka-Ito continued fractions: old and new results} \label{sec:tanaka-ito-continued}
In the following, $T_\alpha$ will always denote the map \eqref{eq:alphamap} for the Tanaka-Ito case, i.e., with $S(x)=1/x$. 
Let us point out that the dynamical systems of $\alpha$ and $1-\alpha$ are isomorphic. Indeed, setting $\tau(x)=-x$ gives 
\begin{equation}\label{eq:symmetry}
\tau \circ T_{\alpha}= T_{1-\alpha} \circ \tau
\end{equation}
for almost every $x\in[\alpha-1,\alpha]$, the only exceptions being the discontinuity points of $T_\alpha$. This implies that the entropy is symmetric with respect to the point $\alpha=1/2$, and the same is true for the bifurcation set $\EE$, since it turns out that these exceptions are irrelevant for the definition of matching interval; see Proposition~\ref{p:atmatching}. Thanks to this symmetry we can restrict our study to  the parameter range $\alpha \in [1/2,1]$.

Setting  $d_{\alpha}(x)= \left\lfloor S(x)+1-\alpha\right\rfloor$, for every $x\in[\alpha-1,\alpha]$ we use the shorthand  $d_{\alpha,n}=d_{\alpha,n}(x)=d_{\alpha}\left(T_{\alpha}^n(x)\right)$ 
to write the continued fraction expansion
\begin{equation*}
x=\frac{1}{\displaystyle d_{\alpha,1}(x)+\frac{1}{\displaystyle d_{\alpha,2}(x)+\frac{1}{\ddots}}} \ .
\end{equation*}
Note that $T_1$ is the Gauss map and $T_{\frac{1}{2}}$ is the map for nearest integer continued fraction expansions. Furthermore $d_n(x)$ is called the $n^{\text{th}}$ partial quotient of~$x$ and can be both negative and positive. Now we define the $n^{\text{th}}$ convergent as
\[
c_{\alpha,n}(x)=\frac{p_{\alpha,n}(x)}{q_{\alpha,n}(x)}= \frac{1}{\displaystyle d_{\alpha,1}(x)+\frac{1}{\displaystyle d_{\alpha,2}(x)+\frac{1}{\displaystyle \ddots+\frac{1}{d_{\alpha,n}(x)}}}} \ .
\]
For the speed of convergence  of (TI) $\alpha$-continued fractions we have
\[
\bigg|x-\frac{p_{\alpha,n}}{q_{\alpha,n}}\bigg|\leq \frac{1}{|q_{\alpha,n}|^2} \le g^n \quad \mbox{with} \quad g:= \frac{\sqrt{5}-1}{2}\;;
\] 
see \cite{TI}.
We can see that the faster $q_{\alpha,n}(x)$ grows the faster the convergence. This is related to the entropy in the following way. 
For fixed $\alpha$, we have 
\[
h(T_{\alpha})=2\lim_{n\rightarrow\infty} \frac{1}{n} \log |q_{\alpha,n}(x)|
\] 
for almost all $x\in [\alpha-1,\alpha]$.
For the regular continued fraction map this relation is fairly known.
The proof in our case can be found in \cite{L} and \cite{NS} and can be used to prove monotonicity on the matching intervals.
Let us recall from \cite{TI} that the symmetric parameter interval $(1-g,g)$ is (almost) covered by the three adjacent matching intervals $(1-g,\sqrt{2}-1)$, $(\sqrt{2}-1,2-\sqrt{2})$ and $(2-\sqrt{2},g)$; so the interesting part of the bifurcation set is in the ranges $[0,1-g]$ and $[g,1]$; since the problem is symmetric with respect to $\alpha=1/2$ (see~\eqref{eq:symmetry}), we can focus on $\EIT \cap [g,1]$.
We will prove the following three characterizations of this set. 

\begin{theorem}\label{thm:charactEIT} 
The bifurcation set on $[g,1]$, with $g = \frac{\sqrt{5}-1}{2}$, is given by
\begin{align}
& \EIT\cap [g,1] \nonumber \\
& \ = \{\alpha \in [g,1]:\, T^n_\alpha(\alpha-1) \le \tfrac{1}{\alpha+1}\ \mbox{and}\ T^n_\alpha(\tfrac{1}{\alpha}-1) \le \tfrac{1}{\alpha+1}\ \mbox{for all}\ n \ge 1\} \label{e:bifurcalpha} \\
& \ = \big\{\alpha \in [g,1]:\, T^n_g(\alpha-1) \ge \alpha-1\ \mbox{and}\ T^n_g(\tfrac{1}{\alpha}-1) \ge \alpha-1\ \mbox{for all}\ n \ge 1\big\} \label{e:bifurcg} \\
& \ = \big\{\alpha \in [g,1]:\, T^n_1(\alpha) \notin (\tfrac{1}{\alpha+1}, \alpha) \ \mbox{for all}\ n \ge 2\ \mbox{and} \label{e:bifurc1} \\
& \hspace{7em} T^n_1(\alpha) \notin (1-\alpha, \tfrac{\alpha}{\alpha+1})\ \mbox{for all}\ n \ge 2\ \mbox{such that $P_n(\alpha)$ is odd}\big\}, \nonumber
\end{align}
where $P_n(\alpha) = \min\{k \ge 0:\, T_1^{n-k}(\alpha) \le 1-\alpha \ \mbox{or}\ T_1^{n-k-1}(\alpha) \ge \alpha\}$.
\end{theorem}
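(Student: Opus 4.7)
The plan is to prove the three equalities of the theorem in sequence, transferring from the $\alpha$-dependent map $T_\alpha$ to the fixed maps $T_g$ and then $T_1$. Throughout, the hypothesis $\alpha \in [g,1]$ will be used via the parabolic-like behavior at the boundary point $\alpha = g$.

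For the first equality \eqref{e:bifurcalpha}, I would rely on two elementary identities valid for $\alpha \in [g,1]$: first, $T_\alpha(\alpha) = 1/\alpha - 1$, which lets me rewrite the matching relation $T_\alpha^M(\alpha-1) = T_\alpha^N(\alpha)$ as a comparison between the $T_\alpha$-orbits of the two critical points $\alpha - 1$ and $1/\alpha - 1$; and second, $T_\alpha(1/(\alpha+1)) = \alpha - 1$, which singles out the top branch $(1/(\alpha+1), \alpha]$ as an ``escape zone'' whose image is $[1/\alpha - 1, \alpha)$. I would then argue: matching holds on an open interval around $\alpha$ if and only if one of these two orbits eventually enters the escape zone, since once it does the subsequent iterate sits in $[1/\alpha - 1, \alpha)$, where a continuity and monotonicity argument on a neighborhood of~$\alpha$ produces a coincidence with an iterate of the other orbit. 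Conversely, as long as both orbits remain in the inert zone $[\alpha-1, 1/(\alpha+1)]$, a cylinder-by-cylinder comparison shows that no synchronization can be triggered, placing $\alpha$ in $\EIT$.

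For the second equality \eqref{e:bifurcg}, the plan is to replace the moving map $T_\alpha$ by the fixed map $T_g$, at the price of flipping the role of the critical thresholds: ``$T_\alpha^n(x) \le 1/(\alpha+1)$'' is to be converted into ``$T_g^n(x) \ge \alpha-1$''. The two maps do not agree on their common domain, but the cylinder boundaries of $T_\alpha$ are obtained from those of $T_g$ by a controlled shift in the digit (and sometimes in the sign) of the Tanaka--Ito expansion; the two orbits produced from the initial points $\alpha-1$ and $1/\alpha - 1$ are constrained by conditions that can be rewritten in an $\alpha$-independent way using the $T_g$-invariant region $[\alpha-1, g]$. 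For the third equality \eqref{e:bifurc1}, I would expand each $T_g$-step on a trajectory as one or more $T_1$-steps together with a possible sign flip corresponding to a negative Tanaka--Ito digit, and check by induction that the parity function $P_n(\alpha)$ records the accumulated parity of such flips. Because the involution $x \mapsto 1-x$ swaps the two forbidden intervals—explicitly, $\alpha/(\alpha+1) = 1 - 1/(\alpha+1)$ and $1-\alpha$ is the image of $\alpha$—the conditions split naturally into the even- and odd-parity sectors appearing in the statement.

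The most delicate step will certainly be the passage to \eqref{e:bifurc1}: constructing an explicit dictionary between Tanaka--Ito symbolic dynamics (with signed digits) and regular continued fractions, while tracking simultaneously the position of the $T_1$-orbit of $\alpha$ and the accumulated sign parity $P_n(\alpha)$, is where the main bookkeeping lies and where a naive translation would fail to produce the parity-conditioned second interval. Steps 1 and 2 should reduce to branch-level computations once the critical role of the threshold $1/(\alpha+1)$ and of the identity $T_\alpha(1/(\alpha+1)) = \alpha - 1$ has been isolated.
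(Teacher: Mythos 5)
Your three-stage architecture ($T_\alpha \to$ escape-zone criterion $\to T_g \to T_1$ with a parity bookkeeping) is exactly the paper's, but the proposal is missing the one idea that makes the first and hardest stage work. You assert that once an orbit enters the escape zone $(\frac{1}{\alpha+1},\alpha]$ "a continuity and monotonicity argument produces a coincidence with an iterate of the other orbit"; landing in $[\frac{1}{\alpha}-1,\alpha)$ does not by itself force any relation with the other orbit. The actual mechanism is an algebraic invariant carried by the \emph{pair} $\big(x_n,y_n\big)=\big(T_\alpha^n(\alpha-1),T_\alpha^n(\frac{1}{\alpha}-1)\big)$: as long as both coordinates stay $\le\frac{1}{\alpha+1}$, the pair always satisfies one of $(x_n+1)(y_n+1)=1$, $x_n+y_n=0$, $x_n+y_n=1$, and these three states form a closed transition system (Lemmas~\ref{l:alphabeta} and~\ref{lem:beforematching}). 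The escape can only occur from the state $x_m+y_m=1$, and it is this identity that yields $x_{m+2}=y_{m+1}$ or $x_{m+1}=y_{m+2}$, hence matching with index $0$ or $-2$. Without this invariant your step 1 has no proof. A second gap is the reverse inclusion $\TEE\subset\EIT$: your "cylinder-by-cylinder comparison shows no synchronization can be triggered" would have to exclude coincidences $T_\alpha^M(\alpha-1)=T_\alpha^{N}(\alpha)$ at \emph{unequal} times, valid on a whole open interval, which the state analysis alone does not do. The paper instead shows that the matching intervals produced by the escape mechanism already have full measure (ergodicity of $T_g$ applied to the characterization \eqref{e:bifurcg}), and then uses maximality and non-overlapping of matching intervals (Lemma~\ref{l:overlap}) to conclude that no other matching intervals exist, so that $\TEE=\EIT\cap[g,1]$.

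For the passage to \eqref{e:bifurc1} you correctly identify the danger (the parity-conditioned interval $(1-\alpha,\frac{\alpha}{\alpha+1})$) and the right heuristic (one $T_\alpha$-step corresponds to one or two $T_1$-steps, with a swap of the two orbits at each double step), but you stop exactly where the work begins. The paper resolves it by enlarging the state space to triples $(\tilde{x}_n,\tilde{y}_n,\tilde{z}_n)$ with $\tilde{z}_n=T_1^{n+r_n+1}(\alpha)$ and a counter $r_n$ of the double steps, distinguishing the states $(\tilde{\mathrm{A}})$, $(\tilde{\mathrm{B}}_0)$, $(\tilde{\mathrm{B}}_1)$, $(\tilde{\mathrm{C}})$; the point is that $T_1^n(\alpha)\in(1-\alpha,\frac{\alpha}{\alpha+1})$ is harmless in state $(\tilde{\mathrm{A}})$ and triggers matching only in state $(\tilde{\mathrm{C}})$, and Lemma~\ref{l:odd} shows that $P_n$ odd is precisely what separates these two cases. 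Your induction "$P_n$ records the accumulated parity of sign flips" is not quite the right invariant ($P_n$ is defined by a backward search from $n$, not as a running count), so this step needs the explicit state machine, not just a dictionary between signed and regular digits.
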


While the characterization in terms of $T_\alpha$ is natural from the definition of the bifurcation set, the characterizations with fixed maps $T_g$ and $T_1$ (which is the classical Gauss map) will be more useful.
In particular, from the ergodicity of $T_g$ and $T_1$ it easily follows that $\EIT$ is a Lebesgue measure zero set. 

\begin{theorem}\label{thm:matchingae}
Matching holds almost everywhere on $[0,1]$ and the only possible indices are $-2,0,2$. 
More precisely, the matching indices are $0$ or $2$ for $\alpha \le 1/2$, and $0$ or $-2$ for $\alpha \ge 1/2$. 
\end{theorem}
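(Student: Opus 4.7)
The plan is to split the theorem into two claims: (i) $\EIT$ has Lebesgue measure zero, and (ii) on every matching interval in $[1/2, 1]$ the index is $0$ or $-2$ (the case $[0, 1/2]$ following by symmetry). Thanks to \eqref{eq:symmetry} (and Proposition~\ref{p:atmatching}, which shows that the finitely many discontinuity points of $T_\alpha$ do not affect matching), the involution $\tau(x) = -x$ conjugates $T_\alpha$ to $T_{1-\alpha}$ and swaps the roles of $\alpha$ and $\alpha - 1$; hence a proof for $\alpha \in [1/2, 1]$ with indices in $\{0, -2\}$ directly yields the statement on $[0, 1/2]$ with indices in $\{0, 2\}$.

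For (i), the natural tool is the characterization \eqref{e:bifurcg} from Theorem~\ref{thm:charactEIT}: an $\alpha \in [g, 1]$ belongs to $\EIT$ iff both orbits $(T_g^n(\alpha-1))_{n \geq 1}$ and $(T_g^n(1/\alpha - 1))_{n \geq 1}$ remain above $\alpha - 1$ forever. The map $T_g$ preserves an ergodic absolutely continuous probability measure on $[g-1, g]$ with density bounded away from zero on any compact subinterval of $(g-1, g)$ (see \cite{TI, NS}). By the Birkhoff ergodic theorem, for a.e.\ $y \in [g-1, g]$ the orbit $(T_g^n y)$ eventually enters any fixed interval of positive measure; applied to the measurable maps $\alpha \mapsto \alpha - 1$ and $\alpha \mapsto 1/\alpha - 1$ via Fubini, this forces $|\EIT \cap [g, 1]| = 0$. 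Combined with the explicit coverage of $(1-g, g)$ by the three matching intervals $(1-g, \sqrt{2}-1)$, $(\sqrt{2}-1, 2-\sqrt{2})$ and $(2-\sqrt{2}, g)$ from \cite{TI}, we conclude $|\EIT| = 0$.

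For (ii), the approach is to analyze the algebraic identity underlying matching. Each iterate of $T_\alpha$ is, on a subinterval of parameters, a Möbius transformation in $\alpha$ with integer coefficients, so the matching relation $T_\alpha^M(\alpha - 1) = T_\alpha^N(\alpha)$ amounts to an equality of two Möbius words. For $\alpha \in (g, 1)$ the first digit of $\alpha$ is forced to be $1$, yielding $T_\alpha(\alpha) = 1/\alpha - 1$; together with a direct inspection of the first iterate of $\alpha - 1$, this should show that whenever the two orbits first share a common value at a given iterate, the word-length discrepancy is exactly $0$ or $2$. The sign structure of $S(x) = 1/x$ in the (TI) case (both branches $x \mapsto 1/x - d$, unlike Nakada's mixed $1/|x| - d$) is precisely what rules out the arbitrarily large indices that appear in the (N) family. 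On the matching intervals lying in $(1-g, g)$ the index can be read off by a direct computation of the first few iterates of $\alpha$ and $\alpha - 1$. The main obstacle is to obtain the index bound uniformly on matching intervals of $(g, 1)$ without enumerating them: the cleanest strategy is probably to show that after a bounded number of steps the two orbits enter a common fundamental domain of the same iterated Möbius map, so any discrepancy in step count must be absorbed into the first one or two iterations and can only equal $0$ or $2$.
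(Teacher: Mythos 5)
Your part (i) is essentially the paper's own argument: the inclusion $\EIT\cap[g,1]\subset\TEE$ (Remark~\ref{r:EITinEE}) together with the characterization of $\TEE$ by $T_g$-orbits (Lemma~\ref{lem:X}) and the ergodicity of $T_g$ gives measure zero; the only technical wrinkle is that the threshold $\alpha-1$ moves with $\alpha$, which is handled not by Fubini but by writing the set as a countable union over fixed thresholds $g-1+\tfrac1k$. The decomposition into $[g,1]$, its symmetric image, and the three explicit intervals covering $(1-g,g)$ is also fine.

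The genuine gap is part (ii). You correctly flag the index bound as ``the main obstacle,'' but what you offer for it is a hope (``this should show,'' ``the cleanest strategy is probably'') rather than an argument, and two ingredients are missing. First, a concrete mechanism forcing the discrepancy to be $0$ or $2$: the paper's mechanism is Lemma~\ref{lem:beforematching}, a three-state transition system showing that the pair $(x_n,y_n)=(T_\alpha^n(\alpha-1),T_\alpha^n(\tfrac1\alpha-1))$ always satisfies one of $(x_n+1)(y_n+1)=1$, $x_n+y_n=0$, $x_n+y_n=1$ as long as both stay $\le\tfrac1{\alpha+1}$, and that at the first time $m$ when one of them exceeds $\tfrac1{\alpha+1}$ one necessarily has $x_m+y_m=1$; Lemma~\ref{l:alphabeta}\ref{i:alphabeta3} then yields $x_{m+2}=y_{m+1}$ or $x_{m+1}=y_{m+2}$, i.e.\ exponents $(M,N)$ with $M-N\in\{0,-2\}$. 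Your ``common fundamental domain of the same iterated M\"obius map'' idea does not substitute for this: nothing in your sketch rules out the orbits synchronizing only after a larger shift, which is exactly what happens in the (N) family despite an equally algebraic structure, so the sign observation about $S(x)=1/x$ alone cannot carry the argument. Second, even granting that the intervals produced this way have index $0$ or $-2$, you must show these are \emph{all} the matching intervals in $[g,1]$, otherwise some other interval could carry a different index; the paper gets this from the fact that the intervals of Proposition~\ref{p:atmatching} already cover a full-measure subset of $[g,1]$ together with Lemma~\ref{l:overlap}, which prevents a matching interval of a different index from overlapping them on more than a countable set. Without both steps the second sentence of the theorem remains unproved.
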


The difference with the matching index for the family (N) and (KU) is thus quite evident: indeed the set of all possible matching indexes of itervals contained in $[0,1/2]$ is $\mathbb{Z}$ for the family (N), and $\NN$ for the family (KU).

The following theorems describe the bifurcation set $\EIT$, i.e. the points where the matching property fails.

\begin{theorem}\label{th:aroundg}
We have that $\EIT$ is a Lebesgue measure zero set and $$\dim_H(\EIT)=1.$$  
Moreover, for all $ \delta >0$ 
\begin{equation*}
\dim_H\left(\EIT\cap (g,g+\delta)\right)=1 \quad \mbox{and} \quad \dim_H\left(\EIT\cap (g+\delta, 1)\right)<1.
\end{equation*}
\end{theorem}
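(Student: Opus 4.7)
The plan is to derive each of the four claims from the three equivalent descriptions of $\EIT\cap[g,1]$ given in Theorem~\ref{thm:charactEIT}; in particular, $\dim_H(\EIT)=1$ is an immediate corollary of the lower bound $\dim_H(\EIT\cap(g,g+\delta))=1$, so three statements remain. For the Lebesgue measure zero claim I would use characterization~\eqref{e:bifurcg}. Fix $\beta>g$ and consider $A_\beta := \{y \in [g-1,g] : T_g^n(y) \ge \beta - 1 \text{ for all } n \ge 0\}$, i.e., the set of points whose forward $T_g$-orbit avoids the positive Lebesgue measure hole $[g-1,\beta-1)$. By the known ergodicity of $T_g$ with respect to its absolutely continuous invariant measure (whose density is bounded away from $0$), $A_\beta$ is Lebesgue null. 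Now \eqref{e:bifurcg} gives $\EIT\cap[\beta,1]\subseteq 1+A_\beta$, hence Lebesgue null, and a countable union over $\beta = g+1/k$ shows that $\EIT\cap(g,1]$ is Lebesgue null. The symmetry~\eqref{eq:symmetry} combined with the fact that $(1-g,g)$ is covered by three matching intervals (recalled in the excerpt) then yields that the entire bifurcation set $\EIT$ is Lebesgue null.

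For the upper bound $\dim_H(\EIT\cap(g+\delta,1))<1$ I would use characterization~\eqref{e:bifurc1}. A direct inequality shows that for every $\alpha\in(g+\delta,1)$ the forbidden interval $(1/(\alpha+1),\alpha)$ contains the fixed open interval $I_\delta:=(1/(g+\delta+1),g+\delta)$; hence the forward Gauss orbit of every $\alpha\in\EIT\cap(g+\delta,1)$ avoids $I_\delta$ from its second iterate onward. The set of points in $[0,1]$ whose forward $T_1$-orbit avoids a fixed open subinterval is a conformal repeller whose complement has positive Lebesgue measure, and Bowen's formula (equivalently, Mauldin--Urba\'nski's thermodynamic formalism) forces its Hausdorff dimension to be strictly less than $1$; the same bound then applies to $\EIT\cap(g+\delta,1)$.

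The main obstacle is the lower bound $\dim_H(\EIT\cap(g,g+\delta))=1$. I would construct, for each small $\eta>0$, a Cantor-type subset of $\EIT\cap(g,g+\eta)$ whose Hausdorff dimension approaches $1$ as $\eta\to 0$. The crucial observation is that the two forbidden intervals appearing in~\eqref{e:bifurc1} collapse to the points $g$ and $1-g=g^2$ as $\alpha\to g^+$, so for $\alpha$ very close to $g$ one only needs to steer the $T_1$-orbit clear of two tiny windows around these two points. Take $\alpha=[0;1,1,\dots,1,a_1,a_2,\dots]$ with exactly $N$ leading $1$'s, so that $|\alpha-g|$ is of order $g^{2N}$; the first $N$ iterates remain close to the fixed point $g$ of $T_1$ but recede from it at the exponential rate $g^{-2}$ and hence automatically avoid the $O(g^{2N})$-wide forbidden window around $g$. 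For the tail digits $(a_i)_{i\ge 1}$ I would restrict to a finite alphabet $B\subseteq\{1,2,\dots,M\}$ subject to two structural constraints---a uniform cap on runs of consecutive $1$'s, and an analogous ban on long factors of the form $2,1,1,\dots,1$---that keep every subsequent iterate $T_1^n(\alpha)$ with $n\ge N$ uniformly bounded away from both $g$ and $g^2$. A Hensley-type pressure estimate for continued fractions whose partial quotient sequence lives in such a sofic shift shows that the Hausdorff dimension of the resulting Cantor set can be made arbitrarily close to $1$ by letting $M\to\infty$. The technical core is to carry out this pressure estimate while simultaneously controlling the two shrinking forbidden intervals.
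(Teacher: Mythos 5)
Your measure-zero argument and your upper bound for $\dim_H(\EIT\cap(g+\delta,1))$ are both correct and essentially the paper's own route: the paper also kills the measure by ergodicity of $T_g$ applied to the sets $\{\alpha: T_g^n(\alpha-1)\ge g-1+\frac1k\ \forall n\}$, and it also gets the upper bound by observing that for $\alpha>g+\delta$ the Gauss orbit must avoid a fixed neighbourhood of $g$ (the paper phrases this as ``no $2n+1$ consecutive ones'', i.e.\ $\EIT\cap(g+\delta,1)\subset C_{2n+1}$, and invokes Theorem~4.7 of Mauldin--Urba\'nski to get $\dim_H<1$; your ``conformal repeller with a hole'' argument needs exactly that reference to handle the infinitely many branches, since ``positive-measure complement'' alone does not force dimension $<1$).

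The genuine gap is in the lower bound, which you correctly identify as the main obstacle. Your combinatorial reduction is right: for $\alpha=[0;1^N,a_1,a_2,\dots]$ with $a_1\ge 2$ ($N$ odd), the forbidden windows $(\frac{1}{\alpha+1},\alpha)$ and $(1-\alpha,\frac{\alpha}{\alpha+1})$ are contained in the cylinders $[1^N]$ and $[2,1^{N-2}]$ respectively, so banning the factors $1^N$ and $2\,1^{N-2}$ in the tail does place $\alpha$ in $\EIT$ by \eqref{e:bifurc1} (and one may simply discard the parity condition on $P_n$, since that only shrinks the constructed set). This is exactly the paper's set $\hat{C}_n=f_1^{2n+3}\circ f_2(C_{2n+1})$. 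The problem is your final step: for a \emph{fixed} run-length cap $N$, letting the alphabet bound $M\to\infty$ makes the dimension converge to $\dim_H$ of the set of \emph{all} continued fractions avoiding $1^N$ (and $2\,1^{N-2}$), which is a number strictly below $1$ — it is at most $\dim_H(C_N)$, and the paper's Proposition~\ref{p:FnCn} shows $\dim_H(C_N)<1$ for every $N$. So the dimension deficit comes from the run-length constraint, not from the digit cap, and the whole weight of the proof rests on showing $\lim_{N\to\infty}\dim_H(C_N)=1$. That is precisely the input your sketch asserts but does not supply; a Hensley-type estimate controls the loss from bounding the digits by $M$, not the loss from forbidding long blocks of $1$'s, and quantifying the latter (the derivative of the dimension with respect to a shrinking hole at the indifferent-like point $g$) is delicate because the excised cylinders $[1^N]$ carry a disproportionate share of the dimension. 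The paper sidesteps this entirely by citing the result of [HY] that $BAD(g)=\{x:g\notin\overline{\{T_1^k(x)\}}\}$ is a winning set, hence has full Hausdorff dimension, and noting $BAD(g)=\bigcup_N C_N$, which gives $\sup_N\dim_H(C_N)=1$ for free. You should either import that result or replace your pressure estimate by a genuine proof that the dimension of the survivor set tends to $1$ as the hole around $g$ shrinks; as written, the crucial limit is unproved and the stated mechanism ($M\to\infty$) cannot yield it.
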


\begin{theorem}\label{th:bifrat}
The bifurcation set $\EIT$ contains infinitely many rational values, and the set of rational bifurcation parameters $\EIT\cap \QQ$ has no isolated points. Moreover for all $r\in \EIT\cap \QQ$ and for all $\delta>0$ we have that
\[
\dim_H(\EIT\cap (r-\delta,r+\delta))>1/2.
\]
\end{theorem}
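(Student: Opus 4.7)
The theorem has three parts; we treat them in turn.

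For the infinite family of rationals, by the symmetry~\eqref{eq:symmetry} it suffices to verify that $(n-1)/n \in \EIT$ for every $n \ge 3$. These values lie in $[2/3,1) \subset [g,1]$, so we apply the characterization~\eqref{e:bifurc1}. The $T_1$-orbit of $(n-1)/n$ is
$$ \tfrac{n-1}{n} \mapsto \tfrac{1}{n-1} \mapsto 0 \mapsto 0 \mapsto \cdots, $$
and elementary inequalities (valid for $n \ge 3$) show that neither $0$ nor $1/(n-1)$ lies in $(1/(\alpha+1),\alpha)$ or $(1-\alpha,\alpha/(\alpha+1))$ at $\alpha = (n-1)/n$, so $(n-1)/n \in \EIT$.

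For the no-isolated-points claim, given $r \in \EIT \cap \QQ$, by symmetry assume $r \in [g,1]$ and write $r = [0; a_1, \ldots, a_N]$ in its terminating Gauss expansion. Set $r_k := [0;a_1,\ldots,a_N,k] \in \QQ$, so $r_k \to r$ as $k \to \infty$. The first $N-1$ iterates of $r_k$ under $T_1$ converge to those of $r$ and hence, by continuity of the forbidden intervals in $\alpha$, avoid them for $k$ large; moreover $T_1^N(r_k) = 1/k$ and $T_1^j(r_k) = 0$ for $j \ge N+1$ both lie in $[0, 1 - r_k]$ for large $k$, avoiding both forbidden intervals. Hence $r_k \in \EIT$ for all sufficiently large $k$.

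For the dimension estimate, fix $r \in \EIT \cap \QQ \cap [g,1]$ with expansion $[0;a_1,\ldots,a_N]$ and $\delta > 0$. The plan is to build a Cantor set $C \subset \EIT \cap (r-\delta, r+\delta)$ with $\dim_H C > 1/2$ by prescribing the first $N+1$ Gauss digits of $\alpha$ as $a_1, \ldots, a_N, K$ (with $K$ chosen large enough that all such $\alpha$ lie in $(r-\delta, r+\delta)$) and letting the subsequent digits range over a suitable subshift. The map $\Phi \colon \alpha \mapsto T_1^{N+1}(\alpha)$ is bi-Lipschitz on this slab, so $\dim_H C$ equals the dimension of the admissible set of tails $\beta = \Phi(\alpha)$, which for $K$ large reduces (up to a small perturbation) to the set of $\beta \in [0, 1]$ whose $T_1$-orbit avoids fixed neighborhoods of $(1/(r+1), r)$ and $(1-r, r/(r+1))$ (the latter only at positions where $P_n$ is odd).

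The technical heart is to construct such a subsystem with Hausdorff dimension strictly above $1/2$. A naive restriction to tails whose Gauss digits are all $\ge n_0$ yields dimension exactly $1/2$ (the pressure series $\sum_{k \ge n_0} k^{-2s}$ sits on its divergence threshold at $s=1/2$), so we must enlarge the alphabet. One route is to admit the digit $1$ whenever the orbit is routed through $\{T_1^m(\alpha) \ge \alpha\}$: this is safe for the first condition and automatically forces $P_{m+1}=0$, neutralizing the parity constraint at the next instant. Encoded as a finite-memory Markov rule, this adds the contracting branch $\psi_1(x) = 1/(1+x)$ to the IFS and contributes strictly positive Bowen pressure at $s = 1/2$, so the Mauldin--Urbanski formula for infinite conformal IFS attractors yields $\dim_H C > 1/2$. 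The main obstacle we anticipate is the careful bookkeeping of $P_n(\alpha)$ to ensure that orbit points in the second forbidden interval only occur at positions where $P_n$ is even; since this depends only on the most recent return of the orbit to $\{T_1^m \le 1-\alpha\}$ or $\{T_1^m \ge \alpha\}$, it is a finite-memory condition and can be built into the subshift.
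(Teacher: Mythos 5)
Your first two parts are sound and essentially coincide with the paper's argument: the verification that $(n-1)/n\in\EIT$ for $n\ge 3$ via the characterization~\eqref{e:bifurc1}, and the approximation of a rational $r\in\EIT$ by the rationals $[0;a_1,\dots,a_N,k]$ with $k$ large, are exactly what is done (the latter is Lemma~\ref{l:aroundbad}; when you invoke ``continuity of the forbidden intervals'' you should still address the case where some $T_1^j(r)$ lies \emph{on} the boundary of a forbidden interval, and check that $P_j$ does not change --- the paper does this by comparing finitely many RCF digits, which is why the new digit is required to exceed $\max(a_2,\dots,a_N)$).

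The dimension estimate, however, rests on a false premise. You assert that restricting the tail digits to be $\ge n_0$ yields Hausdorff dimension exactly $1/2$ because ``the pressure series $\sum_{k\ge n_0}k^{-2s}$ sits on its divergence threshold at $s=1/2$''. That reasoning only identifies $1/2$ as the \emph{finiteness exponent} of the infinite IFS; it does not compute the dimension. Since the pressure is $+\infty$ at $s=1/2$, it remains positive on a right neighbourhood of $1/2$, and the Mauldin--Urba\'nski formula then gives dimension \emph{strictly greater} than $1/2$. This is Good's classical theorem, recorded in Proposition~\ref{p:FnCn} as $\dim_H(H_n)>\tfrac12$ (indeed $>\tfrac12+\tfrac{1}{2\log(n+2)}$), and it is precisely the input the paper uses. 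Consequently the entire second half of your argument --- enlarging the alphabet by admitting the digit $1$, tracking the parity of $P_n$ via a finite-memory Markov rule, and establishing positive pressure of the enlarged system --- is both unnecessary and the part you have not actually carried out (you only ``anticipate'' the obstacles; note also that a digit $1$ sends the orbit near $1$, i.e.\ dangerously close to the forbidden interval $(\frac{1}{\alpha+1},\alpha)$, unless the following digit is tuned to $\alpha$, so that route is genuinely delicate). The correct and short proof is your ``naive'' construction: by Lemma~\ref{l:aroundbad}, a tail all of whose digits are large stays below $1-\alpha$ and therefore avoids \emph{both} forbidden intervals with no parity bookkeeping, so $f_{a_1}\circ\cdots\circ f_{a_N}\circ f_c(H_n)\subset\EIT\cap(r-\delta,r+\delta)$ for suitable $c$ and all large $n$; bi-Lipschitz invariance and $\dim_H(H_n)>1/2$ then finish the proof. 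As written, your proposal does not establish the dimension bound.
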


Theorems~\ref{thm:charactEIT} and \ref{thm:matchingae} are proved in Section~\ref{sec:matchae}. In Section~\ref{sec:dim}, we prove the theorems on dimensional results (Theorems~\ref{th:aroundg} and~\ref{th:bifrat}).

\begin{remark}
 In the case (N) of $\alpha$-continued fractions of Nakada the bifurcation set $\EE_N$  admits an even simpler characterization in terms of the Gauss map~$T_1$ (see~\cite{BCIT}):
\begin{equation}\label{eq:EN}
\EE_N=\{\alpha\in [0,1] :  T_1^k(\alpha)\geq \alpha\ \mbox{for all}\ k\in \NN  \}.
\end{equation}
This characterization is useful to spot analogies and differences. On the one hand, one can easily prove that, for all $\delta>0$, $\dim_H(\EE_N \cap [0,\delta])=1$ and $\dim_H(\EE_N \cap [\delta,1])<1$, a result similar to Theorem~\ref{th:aroundg} with $0$ playing the role of~$g$.
On the other hand \eqref{eq:EN} shows that $\EE_N$ only contains constant type numbers, and in particular it does not contain any positive rational value. Let us also mention that equation \eqref{eq:EN} can also be used to get some insight in the local dimension of $\EE_N$, see \cite{CT3}.
\end{remark}

\subsection{Other families satisfying the matching condition}
Matching can be encountered also in other families of one-dimensional expanding maps, but all cases  known so far  fall in one of the following two types: (a)~continued fraction algorithms; (b)~piecewise affine maps. Moreover, in either  case   matching seems to be induced by some algebraic property of the system.
For instance, let alone the families (N),  (KU) and (TI) mentioned before, in \cite{CKS} the authors study the phenomenon of matching for a family of continued fraction algorithms based on a group which is not the modular group, and each matching interval corresponds to a group identity; see also \cite{DKS}.

The first paper where, in an implicit way, matching was observed in the piecewise affine setting is \cite{BORG}.
This was the starting point for the paper \cite{BCMP} that took advantage of the simplicity of this setting to explore matching in depth. 
Since then, matching has been observed also in other families of affine maps \cite{BCK,DKa}.

However, even in the most simple setting in which matching has been detected, which  is represented by generalized $\beta$-transformations (i.e., the parametric family of maps $(T_\alpha)_\alpha$ where $T_\alpha(x)=\beta x +\alpha - \lfloor \beta x +\alpha\rfloor$ for a certain fixed algebraic integer $\beta$),  there are still several open questions, such as proving that matching has full measure for particular values~$\beta$, or determining which $\beta$ are compatible with matching; see \cite{BCK}. 
One curious feature that has been observed in some of the piecewise affine cases is an explicit bijection between the bifurcation set of Nakada's $\alpha$-continued fractions and a segment of  the bifurcation set of the piecewise family considered; see Remark~4.29 in \cite{BCMP}.

\section{Characterizations of matching intervals and the exceptional set}\label{sec:matchae}
The main tool for the proof of Theorem~\ref{thm:charactEIT} is the following technical lemma which can be used both to compare $\alpha$-continued fractions of two numbers (in particular of $\alpha-1$ and $T_\alpha(\alpha) = \frac{1}{\alpha}-1$) as well as to translate an $\alpha$-continued fraction into a $\beta$-continued fraction (in particular for $\beta = 1$). 

\begin{lemma} \label{l:alphabeta}
Let $g \le \alpha \le \beta \le 1$, $x \in [\alpha-1,\alpha)$, $y \in [\beta-1,\beta)$. 
\begin{enumerate}[label=(\roman*)]
\item \label{i:alphabeta1}
If $x=y$, then $T_\beta(y) - T_\alpha(x) \in \{0,1\}$.
\item \label{i:alphabeta4}
If $y-x =1$, then $(x+1) (T_\beta(y)+1) = 1$.
\item \label{i:alphabeta2}
If $(x+1)(y+1) = 1$ or $x+y = 0$, then $T_\alpha(x) + T_\beta(y) \in \{0,1\}$.
\item \label{i:alphabeta3}
If $x+y = 1$, then 
\[
\begin{cases}  
T_\beta(y) - T_\alpha^2(x) \in \{0,1\} & \text{if } x > \frac{1}{\alpha+1}, 
\\  
 T_\beta^2(y) - T_\alpha(x) \in \{0,1\} & \text{if } y > \frac{1}{\beta+1},
\\
\big(T_\alpha(x)+1\big) \big(T_\beta(y)+1\big) = 1 & \text{otherwise}.
\end{cases}
\] 
\end{enumerate}
\end{lemma}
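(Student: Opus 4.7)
The plan is to prove each part by unpacking the definition $T_\alpha(x) = 1/x - \lfloor 1/x + 1 - \alpha \rfloor$ and carefully controlling the resulting integer parts. The key auxiliary observation, used throughout, is that on $[g,1]$ the function $\alpha \mapsto 1/\alpha + 1 - \alpha$ is strictly decreasing from $2$ to $1$; this is precisely where the hypothesis $\alpha \ge g$ will enter.

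The first three parts admit short direct proofs. For \ref{i:alphabeta1}, the two floor arguments $1/x + 1 - \alpha$ and $1/x + 1 - \beta$ differ by $\beta - \alpha \in [0, 1-g) \subset [0,1)$, so their floors differ by $0$ or $1$. For \ref{i:alphabeta4}, the relation $y = x+1$ forces $y \in [\alpha, \beta)$, which combined with the auxiliary bound gives $1/y + 1 - \beta \in (1, 2)$ and hence $T_\beta(y) = 1/y - 1$, from which the identity follows by rearrangement. For \ref{i:alphabeta2}, one expresses $1/y$ in terms of $1/x$ in each subcase ($1/y = -1/x$ when $x+y=0$; $1/y = -1 - 1/x$ when $(x+1)(y+1)=1$) and writes $T_\alpha(x) + T_\beta(y)$, modulo an integer shift, as an expression of the form $-\lfloor t \rfloor - \lfloor s - t \rfloor$ with $s$ depending only on $\alpha+\beta$; the elementary identity $\lfloor t \rfloor + \lfloor s-t \rfloor \in \{\lfloor s \rfloor - 1, \lfloor s \rfloor\}$ together with the bound $\alpha + \beta \in [2g, 2]$ places the sum in $\{0,1\}$.

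Part \ref{i:alphabeta3} is the heart of the lemma and requires splitting according to the position of $x$. When $x > 1/(\alpha+1)$, the bound $1/x \in (1/\alpha, \alpha+1)$ forces $\lfloor 1/x + 1 - \alpha \rfloor = 1$, so $T_\alpha(x) = 1/x - 1 = y/x \in (0, \alpha)$; a direct computation yields $1/y = 1/T_\alpha(x) + 1$, and the comparison $T_\beta(y) - T_\alpha^2(x)$ once again reduces to a difference of floors whose arguments differ by $1 + \alpha - \beta \in [g, 1]$, hence lies in $\{0,1\}$. The case $y > 1/(\beta+1)$ is entirely symmetric. In the remaining regime, the lower bounds $x \ge \beta/(\beta+1)$ and $y \ge \alpha/(\alpha+1)$ (coming from the complementary conditions on $y$ and $x$) force $\lfloor 1/x + 1 - \alpha \rfloor = \lfloor 1/y + 1 - \beta \rfloor = 2$, so $T_\alpha(x) + 1 = 1/x - 1$ and $T_\beta(y) + 1 = 1/y - 1$; the claimed identity then becomes $(1/x - 1)(1/y - 1) = 1$, which via $1/x + 1/y = 1/(xy)$ is equivalent to the algebraic tautology $x + y = 1$.

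The main obstacle is the bookkeeping in part \ref{i:alphabeta3}: one must verify that the three subcases exhaust all valid configurations with $x + y = 1$, and that the strict inequalities at the boundaries (especially at $\alpha = g$ and at the threshold values $x = 1/(\alpha+1)$ and $x = \beta/(\beta+1)$) are respected. The half-open conventions $x \in [\alpha-1, \alpha)$ and $y \in [\beta-1, \beta)$ are precisely what ensures the floor identifications hold without exception.
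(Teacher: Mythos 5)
Your proposal is correct and follows essentially the same route as the paper: a direct case analysis from the definition of $T_\alpha$, with the same subdivision of part (iv) and the same threshold values, the only cosmetic difference being that you pin down the ambiguous integer by comparing floor arguments while the paper uses the ranges $T_\alpha(x)\in[\alpha-1,\alpha)$ and $T_\beta(y)\in[\beta-1,\beta)$. The one boundary subtlety you flag but do not carry out --- excluding the endpoint $x=\frac{\beta}{\beta+1}$ when $\alpha=\beta=g$ in the last subcase of (iv), which would make the floor equal to $3$ --- is settled in the paper by observing that this would force $y=\beta$, contradicting $y<\beta$.
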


\begin{figure}[ht]
\centering
\includegraphics[scale=.4]{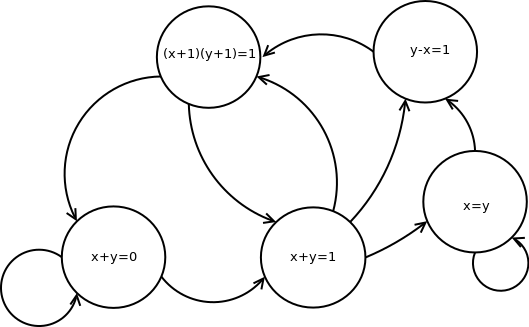}
\caption{A diagram for Lemma~\ref{l:alphabeta}.}\label{fig:diag}
\end{figure}

In Figure~\ref{fig:diag}, one can see which condition can imply which other condition.

\begin{proof}
\textbf{Case \ref{i:alphabeta1}.}
We have $T_\beta(y)-T_\alpha(x) \in \mathbb{Z} \cap (\beta-\alpha-1, \beta-\alpha+1) = \{0,1\}$.

\smallskip\noindent
\textbf{Case \ref{i:alphabeta4}.} 
Since $x \ge \alpha-1$, we have $y \ge \alpha$, thus $(x+1) (T_\beta(y)+1) = \frac{x+1}{y} = 1$. 

\smallskip\noindent
\textbf{Case \ref{i:alphabeta2}.}
Dividing the equations by $xy$ gives us $\frac{1}{x} + \frac{1}{y} = -1$ and $\frac{1}{x} + \frac{1}{y} = 0$ respectively. 
This implies that $T_\alpha(x) + T_\beta(y) \in \mathbb{Z} \cap [\alpha+\beta-2, \alpha+\beta) = \{0,1\}$.

\smallskip\noindent
\textbf{Case \ref{i:alphabeta3}.} 
If $x > \frac{1}{\alpha+1}$, then $\frac{1}{T_\alpha(x)} = \frac{1}{\frac{1}{x}-1}  =\frac{x}{1-x}=\frac{1-y}{y}= \frac{1}{y} - 1$, thus $\frac{1}{y}-\frac{1}{T_\alpha(x)} =1$ and so $T_\beta(y) - T_\alpha^2(x) \in \mathbb{Z} \cap (\beta-1-\alpha,\beta-\alpha+1) = \{0,1\}$. 
Similarly, $y > \frac{1}{\beta+1}$ implies that $T_\beta^2(y) - T_\alpha(x) \in \{0,1\}$. 
\newline 
If $x \le \frac{1}{\alpha+1}$ and $y \le \frac{1}{\beta+1}$, then $x = 1-y \ge \frac{\beta}{\beta+1}\ge \frac{g}{g+1}= \frac{1}{g+2} \ge \frac{1}{\alpha+2}$ and $y = 1-x \ge \frac{\alpha}{\alpha+1} \ge \frac{1}{g+2} \ge \frac{1}{\beta+2}$.
We cannot have $x = \frac{1}{\alpha+2}$ because this would imply that $\alpha = g = \beta = y$, contradicting that $y < \beta$. 
Similarly, we cannot have $y = \frac{1}{\beta+2}$.
From $x \in (\frac{1}{\alpha+2}, \frac{1}{\alpha+1}]$ and $y \in (\frac{1}{\beta+2}, \frac{1}{\beta+1}]$, we infer that $(T_\alpha(x)+1) (T_\beta(y)+1) = (\frac{1}{x} - 1) (\frac{1}{y} - 1) = 1$. 
\end{proof}
 
Lemma~\ref{l:alphabeta} greatly simplifies when taking $\alpha=\beta$ and only looking at the orbits of $\alpha-1$ and $\frac{1}{\alpha}-1$ before exceeding $\frac{1}{\alpha+1}$.
We use the notation
\begin{equation}\label{eq:xix}
x_n:=T_\alpha^n(\alpha-1), \qquad y_n:=T_\alpha^n(\tfrac{1}{\alpha}-1).
\end{equation}

\begin{lemma} \label{lem:beforematching}
Let $\alpha \in [g,1]$ and $m\in\mathbb{N}$ be such that 
\begin{equation} \label{e:xynle}
x_n \leq \tfrac{1}{\alpha+1} \quad \mbox{and} \quad  y_n \leq \tfrac{1}{\alpha+1} \quad \mbox{for all}\ 0 \le n<m.
\end{equation}
Then for all $0\leq n\leq m$ the pair $(x_n,y_n)$ satisfies one of the following relations:
\[
\begin{array}{cc}
\mathrm{(A)} & (x_n +1)(y_n +1) = 1,\\
\mathrm{(B)} & x_n +y_n = 0,\\
\mathrm{(C)} & x_n +y_n = 1.
\end{array}
\]
If $x_m > \frac{1}{\alpha+1}$ or $y_m > \frac{1}{\alpha+1}$, then $x_m+y_m=1$.
\end{lemma}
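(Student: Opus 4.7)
I would prove the main statement by induction on $n$, and then deduce the final sentence as a corollary. The base case $n=0$ is immediate: from $x_0=\alpha-1$ and $y_0=\frac{1}{\alpha}-1$ we get $(x_0+1)(y_0+1)=\alpha\cdot\frac{1}{\alpha}=1$, so relation (A) holds.

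For the inductive step at $0 \le n < m$, I would apply Lemma~\ref{l:alphabeta} with $\beta=\alpha$, using the hypothesis $x_n,y_n\le\frac{1}{\alpha+1}$. If (A) or (B) holds at step $n$, part~\ref{i:alphabeta2} yields $T_\alpha(x_n)+T_\alpha(y_n)\in\{0,1\}$, which is precisely (B) or (C) at step $n+1$. If (C) holds at step $n$, the bound $x_n,y_n\le\frac{1}{\alpha+1}$ puts us in the ``otherwise'' branch of part~\ref{i:alphabeta3}, giving $(x_{n+1}+1)(y_{n+1}+1)=1$, that is (A) at step $n+1$. One small point worth checking here is that the strict boundaries required in the proof of the ``otherwise'' branch ($x>\frac{1}{\alpha+2}$, $y>\frac{1}{\alpha+2}$) hold automatically: (C) combined with $x_n,y_n\le\frac{1}{\alpha+1}$ forces $x_n,y_n\in\big[\frac{\alpha}{\alpha+1},\frac{1}{\alpha+1}\big]$, and $\frac{\alpha}{\alpha+1}\ge\frac{1}{\alpha+2}$ exactly when $\alpha\ge g$.

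For the last sentence, the strategy is to prove the contrapositive: both (A) and (B) at step $m$ force $x_m,y_m\le\frac{1}{\alpha+1}$, so if one of these values exceeds the threshold, only (C) can hold, and therefore $x_m+y_m=1$. In case (A), the identity $(x_m+1)(y_m+1)=1$ together with $x_m,y_m\in[\alpha-1,\alpha)$ constrains both factors $x_m+1$ and $y_m+1$ to lie in $[\alpha,\frac{1}{\alpha}]$, so that $x_m,y_m\le\frac{1-\alpha}{\alpha}$; the assumption $\alpha\ge g$, equivalent to $\alpha^2+\alpha\ge 1$, then yields $\frac{1-\alpha}{\alpha}\le\frac{1}{\alpha+1}$. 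In case (B), $y_m=-x_m$ combined with $x_m,y_m\in[\alpha-1,\alpha)$ and $\alpha>\frac{1}{2}$ forces $x_m,y_m\in[\alpha-1,1-\alpha]$, and $1-\alpha\le\frac{1}{\alpha+1}$ is trivial. The main obstacle is precisely this last verification: one must check that the algebraic constraints defining (A) and (B), when intersected with the domain $[\alpha-1,\alpha)$, are automatically squeezed below $\frac{1}{\alpha+1}$, and this is exactly what the defining identity $g^2+g=1$ guarantees. The threshold $\frac{1}{\alpha+1}$ is therefore ``tight'' only for relation (C), which is the content of the final sentence.
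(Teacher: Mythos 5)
Your proof is correct and follows essentially the same route as the paper: induction starting from state (A) at $n=0$, transitions driven by parts~\ref{i:alphabeta2} and~\ref{i:alphabeta3} of Lemma~\ref{l:alphabeta}, and the final claim obtained by showing that (A) and (B) are incompatible with exceeding the threshold $\frac{1}{\alpha+1}$ (the paper phrases this as a contradiction with $y_m\ge\alpha-1$, you phrase it as the contrapositive, but the computation is the same and hinges on $\alpha\ge g$ in exactly the same way).
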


\begin{figure}[ht]
\centering
\includegraphics[scale=.35]{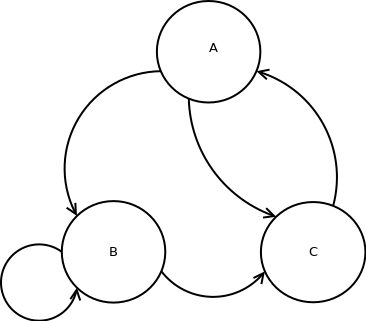}
\caption{A diagram for Lemma~\ref{lem:beforematching}.}\label{fig:diagsimple}
\end{figure}

In Figure~\ref{fig:diagsimple} one can see from which state to which state you can get.

\begin{proof}
The proof is a straightforward  application of Lemma~\ref{l:alphabeta}.
The pair $(x_0,y_0)$ satisfies~(A), condition (i) in Lemma~\ref{l:alphabeta}. Let $ 0 \le n<m$ then $y_n-x_n=1$ is impossible since $x_n,y_n\in[\alpha-1,\alpha)$. Also $x_n=y_n$ is impossible since we have $x_n \le \frac{1}{\alpha+1}$ and $y_n \le \frac{1}{\alpha+1}$ which implies that $(x_n,y_n)$ always are in state (A), (B) or (C).
We find that if $(x_n,y_n)$ satisfies (A) or~(B), then $(x_{n+1},y_{n+1})$ satisfies (B) or~(C). 
If $(x_n,y_n)$ satisfies (C), then (A) holds for $(x_{n+1},y_{n+1})$. 

Now suppose that $x_m > \frac{1}{\alpha+1}$ and (B) holds. Then $y_m<-\frac{1}{\alpha+1}<\alpha-1$ which contradicts with $y_m\in[\alpha-1,\alpha)$. If $x_m > \frac{1}{\alpha+1}$ and (A) holds we find  $y_m=\frac{1}{x_m+1}-1<\frac{1}{\frac{1}{\alpha+1}+1}-1=-\frac{1}{2+\alpha}<\alpha-1$ since $\alpha>g$ which also contradicts with $y_m\in[\alpha-1,\alpha)$. Note that the role of $x_m$ and $y_m$ are interchangeable. We find that if 
 $x_m > \frac{1}{\alpha+1}$ or $y_m > \frac{1}{\alpha+1}$, then $x_m+y_m=1$.
\end{proof}
We focus now on the complement of the set
\[
\TEE = \{\alpha \in [g,1]:\, x_n \le \tfrac{1}{\alpha+1}\ \mbox{and}\ y_n \le \tfrac{1}{\alpha+1}\ \mbox{for all}\ n \ge 1\}
\]
(which is the set in~\eqref{e:bifurcalpha}) and show that it is a union of matching intervals. We also prove certain useful properties of these matching intervals.
 
\begin{proposition} \label{p:atmatching}
Let $\alpha \in (g,1]$, $m \in \mathbb{N}$ and $\epsilon \in \{-1,1\}$ be such that \eqref{e:xynle} holds and $T_\alpha^m(\alpha^\epsilon-1) >  \frac{1}{\alpha+1}$.
Then $\alpha$ belongs to a matching interval~$J$ with the following properties:
\begin{enumerate}[label=(\roman*)]
\item
the matching exponents are $M = m+2-\frac{1-\epsilon}{2}$ and $N = m+2+\tfrac{1-\epsilon}{2}$,
\item
for all $\alpha \in J$, the inequalities in \eqref{e:xynle} and $T_\alpha^m(\alpha^\epsilon-1) >  \frac{1}{\alpha+1}$ hold, \\ in particular $\tilde{\mathcal{E}} \cap J = \emptyset$,
\item
the endpoints $\alpha_1$ and $\alpha_2$ of~$J$ satisfy $T_{\alpha_1}^m(\alpha_1^\epsilon-1) = \frac{1}{\alpha_1+1}$ and \\ $\lim_{z\in J,z\to \alpha_2} T_z^m(z^\epsilon-1) = \alpha_2$, 
\item
for all $\alpha \in J$, we have $T_\alpha^n(\alpha) \neq \alpha-1$ for all $1\leq n<N$ and \\ $T_\alpha^n(\alpha-1) \neq \alpha-1$ for all $1\leq n<M$.
\end{enumerate}
\end{proposition}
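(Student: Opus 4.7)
My strategy is first to verify the matching identity at the given~$\alpha$ by combining Lemma~\ref{lem:beforematching} with Lemma~\ref{l:alphabeta}\ref{i:alphabeta3}, then to propagate both the identity and the strict inequalities to a maximal open neighborhood~$J$, and finally to analyze the endpoint behavior on~$J$ together with the non-return property~(iv).

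For the matching identity itself, Lemma~\ref{lem:beforematching} applied to the hypothesis gives $x_m+y_m=1$. Plugging $(x,y)=(x_m,y_m)$ into Lemma~\ref{l:alphabeta}\ref{i:alphabeta3} with $\alpha=\beta$ further restricts the set $\{0,1\}$ appearing there to $\{0\}$, since both sides then lie in the length-one interval $[\alpha-1,\alpha)$. In the subcase $\epsilon=1$ (when $x_m>\tfrac{1}{\alpha+1}$) this reads $T_\alpha(y_m)=T_\alpha^2(x_m)$, i.e.\ $T_\alpha^{m+2}(\alpha)=T_\alpha^{m+2}(\alpha-1)$, giving matching with $M=N=m+2$; in the subcase $\epsilon=-1$ one symmetrically obtains $T_\alpha^{m+3}(\alpha)=T_\alpha^{m+1}(\alpha-1)$, i.e.\ $M=m+1$, $N=m+3$. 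Both agree with~(i).

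For~(ii), I would let $J$ be the maximal open interval containing~$\alpha$ on which (a)~no digit change occurs along the orbits of $\alpha-1$ and $\tfrac{1}{\alpha}-1$ up to step $m+1$, (b)~\eqref{e:xynle} holds strictly, and (c)~$T_\alpha^m(\alpha^\epsilon-1)>\tfrac{1}{\alpha+1}$ still holds. All three are open conditions in the parameter, so $J$ is a genuine open interval around~$\alpha$; re-running the first-paragraph argument pointwise on~$J$ yields the matching identity throughout~$J$, and Lemma~\ref{l:overlap} then shows that $J$ sits inside a matching interval of the predicted index. For the endpoint characterization~(iii), I would look at which of (a)--(c) fails at each endpoint of~$J$. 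At one endpoint the crucial inequality~(c) collapses into equality, which is exactly $T_{\alpha_1}^m(\alpha_1^\epsilon-1)=\tfrac{1}{\alpha_1+1}$; at the other endpoint a digit change in~(a) is forced by an orbit value approaching~$\alpha$ from below, yielding $\lim_{z\in J,z\to\alpha_2}T_z^m(z^\epsilon-1)=\alpha_2$ by continuity. Failures of~(b) at an interior point are impossible by the definition of~$J$, and the occurrence of~(b) at an endpoint can be reduced to (a) or (c) via the state-transition diagram (A)$\to$(B)$\to$(C)$\to$(A) supplied by Lemma~\ref{l:alphabeta}.

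The step I expect to be the main obstacle is property~(iv), which asserts that neither orbit returns to the point $\alpha-1$ before the matching step. My plan is to classify, in each of the three states~(A),(B),(C) of Lemma~\ref{lem:beforematching}, which pairs $(x_n,y_n)$ can have $\alpha-1$ as a coordinate: in state~(A) this forces $(x_n,y_n)=(x_0,y_0)=(\alpha-1,\tfrac{1}{\alpha}-1)$, in state~(B) it forces $\{x_n,y_n\}=\{\alpha-1,1-\alpha\}$, and in state~(C) it is ruled out by the domain bound $x_n,y_n<\alpha$. Any such configuration at some $n<\min(M,N)$ would produce a matching identity with strictly smaller exponents, hence, on the open set~$J$, a matching index different from the one already computed in~(i); by Lemma~\ref{l:overlap} this would force all $\alpha\in J$ to be rational or quadratic, a contradiction. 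The delicate part is handling state~(B), which can genuinely appear in the orbit, and verifying step by step along the state transitions that no coincidence $(x_n,y_n)\in\{(\alpha-1,1-\alpha),(1-\alpha,\alpha-1)\}$ can arise before step~$m$; this is where most of the detailed bookkeeping of the full proof would lie.
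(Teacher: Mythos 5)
Your derivation of the matching identity and of the exponents in (i) follows the paper's route exactly (Lemma~\ref{lem:beforematching} gives $x_m+y_m=1$, then Lemma~\ref{l:alphabeta}\ref{i:alphabeta3} with $\alpha=\beta$, the difference forced to $0$ because both sides lie in $[\alpha-1,\alpha)$), and that part is fine. The genuine gap is in how you handle the non-return property, and with it the openness of your condition~(a). The claim that ``no digit change up to step $m+1$'' is an open condition at~$\alpha$ is exactly equivalent to the assertion that $x_n\ne\alpha-1$ and $y_n\ne\alpha-1$ for $1\le n\le m$ (the maps $z\mapsto T_z^n(z^{\pm1}-1)$ are discontinuous at $z=\alpha$ precisely when an orbit point hits $\alpha-1$), so you cannot simply assume it: it must be proved, and it is also the heart of~(iv). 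Your proposed proof of it via Lemma~\ref{l:overlap} does not work. That lemma only says that two matching identities with different indices coexist at \emph{at most countably many} parameters; even granting the index bookkeeping, it would exclude a coincidence such as $y_k=\alpha-1$ only for all but countably many $\alpha\in J$ --- not at the particular $\alpha$ you started from, and certainly not ``for all $\alpha\in J$'' as (iv) demands. (Moreover the indices can genuinely collide: $T_\alpha^2(\alpha)=\alpha-1$ is an identity of index $-2$, the same as the matching index when $\epsilon=-1$, so Lemma~\ref{l:overlap} is silent there; and a return $x_n=\alpha-1$ is not by itself of the form $T^M(\alpha-1)=T^N(\alpha)$.) The paper closes this hole with a direct pointwise argument (Lemma~\ref{l:continuous}): if $x_n=x_0$ or $y_n=x_0$ for some $1\le n\le m$, the state relations (A)/(B) force $\{x_{n+1},y_{n+1}\}=\{x_1,y_1\}$ and hence $\{x_m,y_m\}=\{x_{m-n},y_{m-n}\}$, contradicting \eqref{e:xynle} because $\max(x_m,y_m)>\frac{1}{\alpha+1}$ while $x_{m-n},y_{m-n}\le\frac{1}{\alpha+1}$. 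Some argument of this kind is indispensable and is missing from your plan.

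Two smaller points. First, the strictness of the inequalities in \eqref{e:xynle} at the given $\alpha$ is not part of the hypothesis (only $\le$ is assumed) and needs the same lemma: $x_n=\frac{1}{\alpha+1}$ would give $x_{n+1}=\alpha-1$. Second, for (iii) you must show not only that your interval sits inside a matching interval but that the matching interval does not extend past $\alpha_1$ and $\alpha_2$; the paper does this by noting that parameters just beyond those endpoints have a different minimal escape time~$m$, hence lie in matching intervals with different exponents, and then invoking Lemma~\ref{l:overlap} on those \emph{whole neighbouring intervals} --- a use of the lemma that is legitimate there precisely because the competing identity holds on an uncountable set, unlike in your argument for (iv).
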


\begin{remark}\label{r:EITinEE}
Proposition~\ref{p:atmatching} implies that $\EIT \cap [g,1] \subset \TEE$.
\end{remark}
For the proof of the proposition, we use the following lemma. 

\begin{lemma} \label{l:continuous}
Let $\alpha \in (g,1]$, $m \in \mathbb{N}$ such that \eqref{e:xynle} holds, and $x_m > \frac{1}{\alpha+1}$ or $y_m > \frac{1}{\alpha+1}$. 
Then the maps $T_z^m(z-1)$ and $T_z^m(\frac{1}{z}-1)$ are continuous at $z = \alpha$, and all inequalities in \eqref{e:xynle} are strict.
\end{lemma}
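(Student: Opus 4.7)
The plan is to prove both assertions---the strict inequalities and the continuity---by contradiction, exploiting the $(A)/(B)/(C)$ state structure from Lemma~\ref{lem:beforematching} together with a re-entry identity that forces any would-be violation to trap the pair orbit in a bounded periodic cycle, contradicting the existence of $m$ with $x_m>\tfrac{1}{\alpha+1}$ or $y_m>\tfrac{1}{\alpha+1}$.

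For the strict inequalities, I would assume $x_n = \tfrac{1}{\alpha+1}$ for some $0\le n<m$ (the case $y_n = \tfrac{1}{\alpha+1}$ is symmetric). Lemma~\ref{lem:beforematching} puts $(x_n,y_n)$ into one of the three states. States (A) and (B) yield $y_n = -\tfrac{1}{\alpha+2}$ and $y_n = -\tfrac{1}{\alpha+1}$ respectively, both strictly less than $\alpha-1$ for $\alpha>g$, contradicting $y_n\in[\alpha-1,\alpha)$. State (C) gives $y_n=\tfrac{\alpha}{\alpha+1}$, and direct calculation shows $T_\alpha\bigl(\tfrac{1}{\alpha+1}\bigr)=\alpha-1=x_0$ and $T_\alpha\bigl(\tfrac{\alpha}{\alpha+1}\bigr)=\tfrac{1}{\alpha}-1=y_0$, so $(x_{n+1},y_{n+1})=(x_0,y_0)$. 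The pair orbit is therefore purely periodic of period $n+1$ and all its values are $\le\tfrac{1}{\alpha+1}$, contradicting $x_m>\tfrac{1}{\alpha+1}$ or $y_m>\tfrac{1}{\alpha+1}$.

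For continuity I would induct on $n$: the inductive step for $z\mapsto T_z^{n+1}(z-1)$ at $z=\alpha$ reduces to showing that $T_\alpha$ is continuous at $x_n$, equivalently $\tfrac{1}{x_n}+1-\alpha\notin\mathbb{Z}$, and similarly for $y_n$. Assuming for contradiction $x_n=\tfrac{1}{k+\alpha-1}$ for some $k\in\mathbb{Z}$ and $0\le n<m$, the case $k=2$ is excluded by the strict inequality just proved. For $k\ne 2$ the constraint $y_n\le\tfrac{1}{\alpha+1}$ (with $\alpha>g$) rules out state (C) outright: there $y_n=\tfrac{k+\alpha-2}{k+\alpha-1}$, which for $k\ge 3$ exceeds $\tfrac{1}{\alpha+1}$ whenever $\alpha>g$ (e.g., at $k=3$ this reduces to $\alpha^2+\alpha-1>0$), and for $k\le 0$ falls outside $[\alpha-1,\alpha)$ since $y_n>1$. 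Thus the state is (A) or (B), and short calculations using $\alpha>g$ (so that $1-2\alpha\in(-1,0)$ and $2-2\alpha\in(0,1)$) give $T_\alpha(x_n)=\alpha-1$ and $T_\alpha(y_n)=1-\alpha$, hence $(x_{n+1},y_{n+1})=(\alpha-1,1-\alpha)$.

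The closing ingredient is the \emph{re-entry identity}
\[
T_\alpha(1-\alpha) \;=\; T_\alpha\!\left(\tfrac{1}{\alpha}-1\right) \;=\; y_1,
\]
which holds because $\tfrac{1}{1-\alpha}-\tfrac{\alpha}{1-\alpha}=1$, so the floor expressions defining $T_\alpha$ at these two arguments differ by exactly one and the subtracted integers cancel the shift. Consequently $(x_{n+2},y_{n+2})=(x_1,y_1)$, and by induction $(x_{n+1+j},y_{n+1+j})=(x_j,y_j)$ for all $j\ge 1$. The pair orbit then cycles through $\{(x_j,y_j):1\le j\le n\}\cup\{(\alpha-1,1-\alpha)\}\subset[\alpha-1,\tfrac{1}{\alpha+1}]^2$, contradicting $x_m>\tfrac{1}{\alpha+1}$ or $y_m>\tfrac{1}{\alpha+1}$. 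The case $y_n=\tfrac{1}{k+\alpha-1}$ is symmetric. The main obstacle is securing the re-entry identity: without it, the orbit reaching $(\alpha-1,1-\alpha)$ could a priori wander above $\tfrac{1}{\alpha+1}$, but the identity locks it back onto the original trajectory from step $1$ onward, closing the trap.
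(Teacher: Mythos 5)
Your proof is correct, and it follows the same overall strategy as the paper: run the $(A)/(B)/(C)$ state machine of Lemma~\ref{lem:beforematching} and show that any would-be violation forces the pair orbit $(x_j,y_j)$ to recur among values that are at most $\tfrac{1}{\alpha+1}$, contradicting the escape at time~$m$. The implementation differs, though. You prove strictness first and directly (states (A), (B) push $y_n$ out of $[\alpha-1,\alpha)$; state (C) returns the pair exactly to $(x_0,y_0)$), and you close the continuity argument with a per-state computation involving the integer $k$ followed by the re-entry identity $T_\alpha(1-\alpha)=T_\alpha(\tfrac{1}{\alpha}-1)$, which lands the pair on $(x_1,y_1)$. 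The paper instead phrases the obstruction one step later, as ``the orbit returns to $x_0=\alpha-1$'', rules out state (C) there in one line ($y_n=2-\alpha>\alpha$), and obtains the synchronization $\{x_{n+1},y_{n+1}\}=\{x_1,y_1\}$ abstractly: both sums $x_{n+1}+y_{n+1}$ and $x_1+y_1$ lie in $\{0,1\}$ while $y_{n+1}-y_1\in(-1,1)$, hence is $0$; strictness is then a one-line corollary, since $x_n=\tfrac{1}{\alpha+1}$ forces $x_{n+1}=x_0$. So the paper's route is shorter and avoids your explicit $k$-case analysis, while yours buys concrete information (the landing pair $(\alpha-1,1-\alpha)$ and the identity $T_\alpha(1-\alpha)=T_\alpha(\tfrac{1}{\alpha}-1)$, which is essentially your explicit substitute for the paper's sum-difference trick). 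Two cosmetic points: in excluding state (C) you skip $k=1$, which is vacuous since then $x_n=\tfrac{1}{\alpha}\ge 1$ lies outside $[\alpha-1,\alpha)$; and the continuity induction implicitly needs $x_n\ne 0$ and $y_n\ne 0$, which follows from the states (otherwise the pair would be $(0,0)$ from some index on, contradicting the hypothesis at $m$) --- the paper glosses over the same point.
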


\begin{proof}
The maps $T_z^n(z-1)$ and $T_z^n(\frac{1}{z}-1)$ are continuous at $z = \alpha$ for all $1 \le n \le m$ if and only if $x_n \ne x_0$ and $y_n \ne x_0$ for all $1 \le n \le m$.
Suppose that $x_n = x_0$ or $y_n = x_0$ for some $1 \le n \le m$. If $(x_n,y_n)$ satisfies (C) and  $x_n = x_0$ then $x_n+y_n=\alpha-1+y_n=1$ and so $y_n=2-\alpha>\alpha$. Since we can use the same reasoning for $y_n=x_0$ we find that $(x_n, y_n)$ satisfies (A) or (B).  This gives $n < m$ and $x_{n+1} + y_{n+1} \in \{0,1\}$, $x_1 + y_1 \in \{0,1\}$. We find $x_{n+1} + y_{n+1}-(x_1 + y_1)\in\{-1,0,1\}$. If $x_n = x_0$, then  we have $x_{n+1} = x_1$ and $x_{n+1} + y_{n+1}-(x_1 + y_1)=y_{n+1}-y_1\in\{-1,0,1\}$ where we can exclude $\pm 1$ since $y_{n+1},y_1\in[\alpha-1,\alpha)$ and thus $y_{n+1} = y_1$; if $y_n = x_0$, then we have $y_{n+1} = x_1$ and thus $x_{n+1} = y_1$. We get that $\{x_{m-n}, y_{m-n}\} = \{x_m, y_m\}$, contradicting~\eqref{e:xynle}. 

Since $x_n = \frac{1}{\alpha+1}$ and $y_n = \frac{1}{\alpha+1}$ imply $x_{n+1} = x_0$ and $y_{n+1} = x_0$ respectively, all inequalities in \eqref{e:xynle} are strict.
\end{proof}

\begin{proof}[Proof of Proposition~\ref{p:atmatching}] 
Let $\alpha_0 \in (g,1]$, $m \in \mathbb{N}$ and $\epsilon \in \{-1,1\}$ be such that \eqref{e:xynle} holds for $\alpha_0$ and $T_{\alpha_0}^m(\alpha_0^\epsilon-1) >  \frac{1}{\alpha_0+1}$.
By Lemma~\ref{lem:beforematching}, we have $x_m + y_m = 1$.
Then Lemma~\ref{l:alphabeta} gives that $x_{m+2} = y_{m+1}$ if $x_m > \frac{1}{\alpha_0+1}$, i.e., $T_{\alpha_0}^{m+2}(\alpha_0-1) = T_{\alpha_0}^{m+2}(\alpha_0)$, and that $x_{m+1} = y_{m+2}$ if $y_m > \frac{1}{\alpha_0+1}$, i.e., $T_{\alpha_0}^{m+1}(\alpha_0-1) = T_{\alpha_0}^{m+3}(\alpha_0)$.
By Lemma~\ref{l:continuous}, the maps $T_z^n(z-1)$ and $T_z^n(\frac{1}{z}-1)$ are continuous at $z = \alpha_0$ for all $1 \le n \le m$, and all involved inequalities are strict.
Therefore, $\alpha_0$ is in the interior of a matching interval~$J$ with matching exponents $M = m+2-\frac{1-\epsilon}{2}$ and $N = m+2+\tfrac{1-\epsilon}{2}$.

Let $f$ be the linear fractional transformation satisfying $f(z) = T_z^m(z^\epsilon-1)$ around $z = \alpha_0$. 
By Lemma~\ref{l:continuous}, we get for all $\alpha$ satisfying $\frac{1}{\alpha+1} < f(\alpha) < \alpha$ that $f(\alpha) = T_\alpha^m(\alpha^\epsilon-1)$ and \eqref{e:xynle} holds. 
Since $f$ is expanding at these points, we have some $\alpha_1$, $\alpha_2$ with $f(\alpha_1) = \frac{1}{\alpha_1+1}$ and $f(\alpha_2) = \alpha_2$.
Then $J$ contains the open interval with endpoints $\alpha_1,\alpha_2$.
Arbitrarily close to $\alpha_1$ and $\alpha_2$, we can find $\alpha$ where the minimal $n$ such that $T_\alpha^n(\alpha-1) \ge \frac{1}{\alpha+1}$ or $T_\alpha^n(\frac{1}{\alpha}-1) \ge \frac{1}{\alpha+1}$ is different from~$m$.
Therefore, these points are in matching intervals with different matching exponents than~$J$. 
Hence, by Lemma~\ref{l:overlap}, they are not in~$J$, and the endpoints of~$J$ are $\alpha_1$ and~$\alpha_2$.
Since $T_z^m(z^\epsilon-1)$ is continuous on~$J$, we have $T_{\alpha_1}^m(\alpha_1^\epsilon-1) = f(\alpha_1) = \frac{1}{\alpha_1+1}$ and $\lim_{z\in J,z\to \alpha_2} T_z^m(z^\epsilon-1) = f(\alpha_2) = \alpha_2$.

By Lemma \ref{l:continuous}, we have for all $\alpha \in J$ that $x_n \ne x_0$ for all $1 \le n \le m$ and $y_n \ne x_0$ for all $0 \le n \le m$.
If $x_m > \frac{1}{\alpha+1}$, then we also have $x_{m+1} \ne x_0$, and $y_m > \frac{1}{\alpha+1}$ implies that $y_{m+1} \ne x_0$. 
This gives that $T_\alpha^n(\alpha) \neq \alpha-1$ for all $1\leq n<N$ and $T_\alpha^n(\alpha-1) \neq \alpha-1$ for all $1\leq n<M$.
\end{proof}

In order to prove that $\TEE$ has measure zero, we prove that it is equal to the set in \eqref{e:bifurcg}. 

\begin{lemma} \label{lem:X}
Let $\alpha \in (g,1]$, $z \in [\alpha-1, g)$. 
The following conditions are equivalent:
\begin{enumerate}[label=(\roman*)]
\item \label{i:X1}
$T_\alpha^n(z) = T_g^n(z)$ for all $n\in \NN$,
\item \label{i:X2}
$T_g^n(z) \geq \alpha-1$ for all $n\in \NN$,
\item \label{i:X3}
$T_\alpha^n(z) < g$ for all $n\in \NN$,
\item \label{i:X4}
$T_\alpha^n(z) \leq \frac{1}{\alpha+1}$ for all $n\in \NN$.
\end{enumerate}
In particular, we have 
\[
\TEE = \big\{\alpha \in [g,1]:\, T^n_g(\alpha-1) \ge \alpha-1\ \mbox{and}\ T^n_g(\tfrac{1}{\alpha}-1) \ge \alpha-1\ \mbox{for all}\ n \ge 1\big\}.
\]
\end{lemma}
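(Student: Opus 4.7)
The plan is to prove the cycle (i)$\Rightarrow$(ii), (ii)$\Rightarrow$(i), (i)$\Leftrightarrow$(iii), and (iii)$\Leftrightarrow$(iv), and then to specialize with $z=\alpha-1$ and $z=1/\alpha-1$. The unifying principle to exploit is that for any $y\in[\alpha-1,g)\setminus\{0\}$, both $T_\alpha(y)$ and $T_g(y)$ equal $1/y$ minus an integer, so whenever one can confine them jointly into any interval of length one they must coincide.

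I would handle the implications (ii)$\Rightarrow$(i) and (iii)$\Rightarrow$(i) via the same inductive template. Assume $T_\alpha^k(z)=T_g^k(z)$ for all $k\le n$; the common value then lies in $[\alpha-1,g)$, and both $T_\alpha^{n+1}(z)$ and $T_g^{n+1}(z)$ differ from $1/T_\alpha^n(z)$ by an integer. Under~(ii), $T_g^{n+1}(z)\ge\alpha-1$, and since $T_g^{n+1}(z)<g\le\alpha$ automatically, $T_g^{n+1}(z)$ sits inside the fundamental domain $[\alpha-1,\alpha)$ of $T_\alpha$ and therefore equals $T_\alpha^{n+1}(z)$. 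Under~(iii), instead, $T_\alpha^{n+1}(z)<g$ together with $T_g^{n+1}(z)\in[g-1,g)$ places both iterates in $[g-1,g)$, an interval of length one, forcing equality. The reverse implications (i)$\Rightarrow$(ii) and (i)$\Rightarrow$(iii) are immediate from the codomains $[\alpha-1,\alpha)$ and $[g-1,g)$.

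For (iii)$\Leftrightarrow$(iv), the identity $g(g+1)=1$ gives $\frac{1}{\alpha+1}\le g$, so (iv)$\Rightarrow$(iii) is free. The converse rests on one short calculation: $T_\alpha$ sends the window $(\frac{1}{\alpha+1},g)$ strictly above~$g$. Indeed, for $x$ in this window one has $1/x\in(g+1,\alpha+1)\subset(1,2)$, so the partial quotient equals~$1$ and $T_\alpha(x)=1/x-1\in(g,\alpha)$. Hence an iterate landing in $(\frac{1}{\alpha+1},g)$ would push the next one above~$g$, contradicting~(iii); therefore every iterate must lie in $[\alpha-1,\frac{1}{\alpha+1}]$.

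To conclude I would apply (i)$\Leftrightarrow$(iv) with $z=\alpha-1$ (which belongs to $[\alpha-1,g)$ because $\alpha\le 1<g+1$) and with $z=1/\alpha-1$ (in $[\alpha-1,g)$ for $\alpha>g$, using $1/g-1=g$), obtaining the claimed description of $\TEE\cap(g,1]$. The $n=0$ conditions are automatic: $\alpha-1\le 0\le\frac{1}{\alpha+1}$, while $1/\alpha-1\le\frac{1}{\alpha+1}$ reduces to $\alpha(\alpha+1)\ge 1$, i.e.\ $\alpha\ge g$, so restricting to $n\ge 1$ is harmless. The boundary case $\alpha=g$ is checked by hand and is trivial, since then $\frac{1}{\alpha+1}=g$ and every $T_g$-orbit already sits in $[g-1,g)=[\alpha-1,\frac{1}{\alpha+1})$, so both sides of the claimed identity contain~$g$. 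The main obstacle, and the only genuinely non-inductive step, is the image computation $T_\alpha\bigl((\frac{1}{\alpha+1},g)\bigr)\subset(g,\alpha)$ that drives~(iii)$\Rightarrow$(iv); everything else reduces to uniqueness of representatives modulo~$\mathbb{Z}$ in length-one intervals.
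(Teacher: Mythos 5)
Your proof is correct and takes essentially the same route as the paper: the equivalences \ref{i:X1}$\Leftrightarrow$\ref{i:X2}$\Leftrightarrow$\ref{i:X3} via uniqueness of representatives of $1/T^n(z)$ modulo $\mathbb{Z}$ in the length-one fundamental domains (which the paper compresses into ``direct consequences of the definition''), and \ref{i:X3}$\Leftrightarrow$\ref{i:X4} via the computation that $T_\alpha$ maps $(\frac{1}{\alpha+1},g)$ strictly above $g$, which is exactly the paper's argument. Your additional checks (the $n=0$ conditions, admissibility of $z=\alpha-1$ and $z=\frac{1}{\alpha}-1$, the boundary case $\alpha=g$) are correct and only make explicit what the paper leaves implicit.
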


\begin{proof}
The equivalences \ref{i:X2} $\Leftrightarrow$ \ref{i:X1} $\Leftrightarrow$ \ref{i:X3} are direct consequences of the definition of~$T_\alpha$.
Since $\frac{1}{1+\alpha} < g$, we have \ref{i:X4} $\Rightarrow$ \ref{i:X3}. 
For the converse, suppose that $T_\alpha^n(z) > \frac{1}{\alpha+1}$ for some~$n$. 
Then we have $T_\alpha^{n+1}(z) = \frac{1}{T_\alpha^n(z)} - 1$, thus $T_\alpha^n(z) \geq g$ or $T_\alpha^{n+1}(z) > \frac{1}{g} - 1 = g$, hence \ref{i:X3} does not hold.
\end{proof}

Now we prove that matching is prevalent and the only indices are $-2,0,2$.

\begin{proof}[Proof of Theorem~\ref{thm:matchingae}]
We claim that $\TEE$ has measure zero. Indeed, we have
\begin{align*}
\tilde{\mathcal{E}} & \subset \{\alpha \in (g,1]:\, T_g^n(\alpha-1) \ge \alpha-1\ \mbox{for all}\ n \ge 1\} \\
& \subset \bigcup_{k=1}^\infty \{\alpha \in (g,1]:\, T_g^n(\alpha-1) \ge g-1+\tfrac{1}{k}\ \mbox{for all}\ n \ge 1\}.
\end{align*}
Since $T_g$ is ergodic (with respect to an absolutely continuous invariant measure), all these sets have Lebesgue measure zero, and the same is true for~$\TEE$. 
By Proposition~\ref{p:atmatching}, we obtain that the matching set has full measure on $[g,1]$. 
Therefore, Proposition~\ref{p:atmatching} gives all matching intervals in $[g,1]$, hence the only possible indices are $0$ and~$-2$.
By the symmetry mentioned at the beginning of Section~\ref{sec:tanaka-ito-continued}, using Proposition~\ref{p:atmatching}~(iv), we obtain that the matching set also has full measure on $[0,1-g]$, with the only possible indices $0$ and~$2$.

By Proposition~\ref{p:atmatching}, we also know that each $\alpha \in [g,1] \setminus \TEE$ belongs to a matching interval and no $\alpha$ in the matching set is in~$\TEE$, thus $\TEE=\EE \cap [g,1]$. 
\end{proof}

For the characterization in terms of the regular continued fraction we need the following lemma.

\begin{lemma} \label{l:xyz}
Let $\alpha \in (g,1]$, $x, y \in [\alpha-1, \alpha)$, $z \in [0,1)$, $x' = T_\alpha(x)$, $x'' = T_\alpha^2(x)$, $y' = T_\alpha(y)$, $y'' = T_\alpha^2(y)$, $z' = T_1(z)$, $z'' = T_1^2(z)$. 
\begin{enumerate}[label=(\roman*)]
\item \label{i:xyz1}
If $z=x$, $x+y=0$ or $(x+1)(y+1) = 1$, then $z'=x'$, $x'+y' \in \{0,1\}$, or $z'=1+x'$  and $x'+y'=0$. 
\item \label{i:xyz2}
If $z=1+x$ and $x+y=0$, then $z''=y'$ and $x'+y' \in \{0,1\}$, or $z''=1+y'$ and $x'+y'=0$.
\item \label{i:xyz3}
If $z=x$, $x+y = 1$, then 
\[
\begin{cases}  
x'=y'' & \text{if } z < \frac{\alpha}{\alpha+1}, \\  
z'=x',\, (x'+1)(y'+1)=1 & \text{if } \frac{\alpha}{\alpha+1} \le z \le \frac{1}{2}, \\
z''=y',\, (x'+1)(y'+1)=1,& \text{if } \frac{1}{2} < z \le \frac{1}{\alpha+1}, \\
x''=y' & \text{if } z > \frac{1}{\alpha+1}.
\end{cases}
\] 
\end{enumerate}
\end{lemma}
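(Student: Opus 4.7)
The plan is to deduce each part from Lemma~\ref{l:alphabeta} together with elementary arithmetic using the hypothesis $\alpha > g$ to control the floor functions in $T_1$ versus $T_\alpha$.

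For part~\ref{i:xyz1}, the relation $z=x$ places $(x,z)$ in case~\ref{i:alphabeta1} of Lemma~\ref{l:alphabeta} with $\beta=1$, so $z'-x' \in \{0,1\}$. The pair $(x,y)$ satisfies case~\ref{i:alphabeta2} with $\alpha=\beta$, giving $x'+y' \in \{0,1\}$. All that remains is to rule out $z'=1+x'$ together with $x'+y'=1$: the first forces $x' < 0$, hence $y'=1-x' > 1$, contradicting $y' < \alpha \le 1$.

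Part~\ref{i:xyz2} proceeds analogously with an extra iteration. Applying case~\ref{i:alphabeta4} with $\beta=1$ to $z=1+x$ gives $(x+1)(z'+1)=1$, so the pair $(x,z')$ now satisfies case~\ref{i:alphabeta2} and yields $x'+z'' \in \{0,1\}$. Simultaneously $x+y=0$ gives $x'+y' \in \{0,1\}$. Subtracting shows $z''-y' \in \{-1,0,1\}$; the value $-1$ is impossible since $y' < \alpha \le 1 \le z''+1$, and the combination $z''=1+y'$ with $x'+y'=1$ is excluded exactly as in part~\ref{i:xyz1}.

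For part~\ref{i:xyz3}, case~\ref{i:alphabeta3} of Lemma~\ref{l:alphabeta} (with $\alpha=\beta$ and $x+y=1$) delivers the two outer cases directly: the $\{0,1\}$ option there reduces to $\{0\}$ because $y', x', x'', y''$ all lie in $[\alpha-1,\alpha)$ and so cannot differ by $1$. That same case yields $(x'+1)(y'+1)=1$ throughout the middle range $z \in [\frac{\alpha}{\alpha+1}, \frac{1}{\alpha+1}]$, so only the Gauss-map identities $z'=x'$ (when $z \le \frac{1}{2}$) and $z''=y'$ (when $z > \frac{1}{2}$) remain. For $z \le \frac{1}{2}$, the bound $z \ge \frac{\alpha}{\alpha+1}$ gives $\frac{1}{z} \le 1+\frac{1}{\alpha} < 2+\alpha$, the last inequality being equivalent to $\alpha>g$; together with $\frac{1}{z}\ge 2$ this pins $\lfloor \frac{1}{z} \rfloor = 2$ and forces the fractional part to be $<\alpha$, so $T_1(z) = T_\alpha(x)$. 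For $z > \frac{1}{2}$, one first computes $T_1(z) = \frac{1-z}{z} \in [\alpha,1)$ and then $\lfloor \frac{1}{T_1(z)} \rfloor = 1$ (again by $\alpha>g$), yielding $T_1^2(z) = \frac{2z-1}{1-z}$; the same kind of floor analysis applied to $\frac{1}{y}+1-\alpha$ (using $y > \frac{1}{\alpha+2}$, equivalent to $\alpha>g$) gives $T_\alpha(y) = \frac{2z-1}{1-z}$ as well, so $z''=y'$.

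The main obstacle is this last sub-case: three successive applications of maps have to be lined up, and the hypothesis $\alpha>g$ (equivalent to $\frac{\alpha}{\alpha+1} > \frac{1}{\alpha+2}$) must be deployed at precisely the right places to ensure that the floors appearing in the Gauss and $T_\alpha$ definitions pick out the same integer on the relevant ranges.
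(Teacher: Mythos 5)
Your proof is correct and follows essentially the same route as the paper: every claim is reduced to Lemma~\ref{l:alphabeta}, and the inadmissible combinations (such as $z'=1+x'$ together with $x'+y'=1$) are eliminated by the same kind of range argument exploiting $x',y'\in[\alpha-1,\alpha)$, $z',z''\in[0,1)$ and $\alpha>g$. The only difference is one of explicitness: where the paper disposes of the $z\lessgtr\frac{1}{2}$ split in part~\ref{i:xyz3} with the single observation that $z'\ge\alpha$ forces $z''\le\frac{1}{\alpha}-1$, you carry out the floor computations for $T_1$ and $T_\alpha$ directly, which is a legitimate (and arguably more transparent) way of verifying the same identities.
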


\begin{figure}[ht]
\centering
\includegraphics[scale=.20]{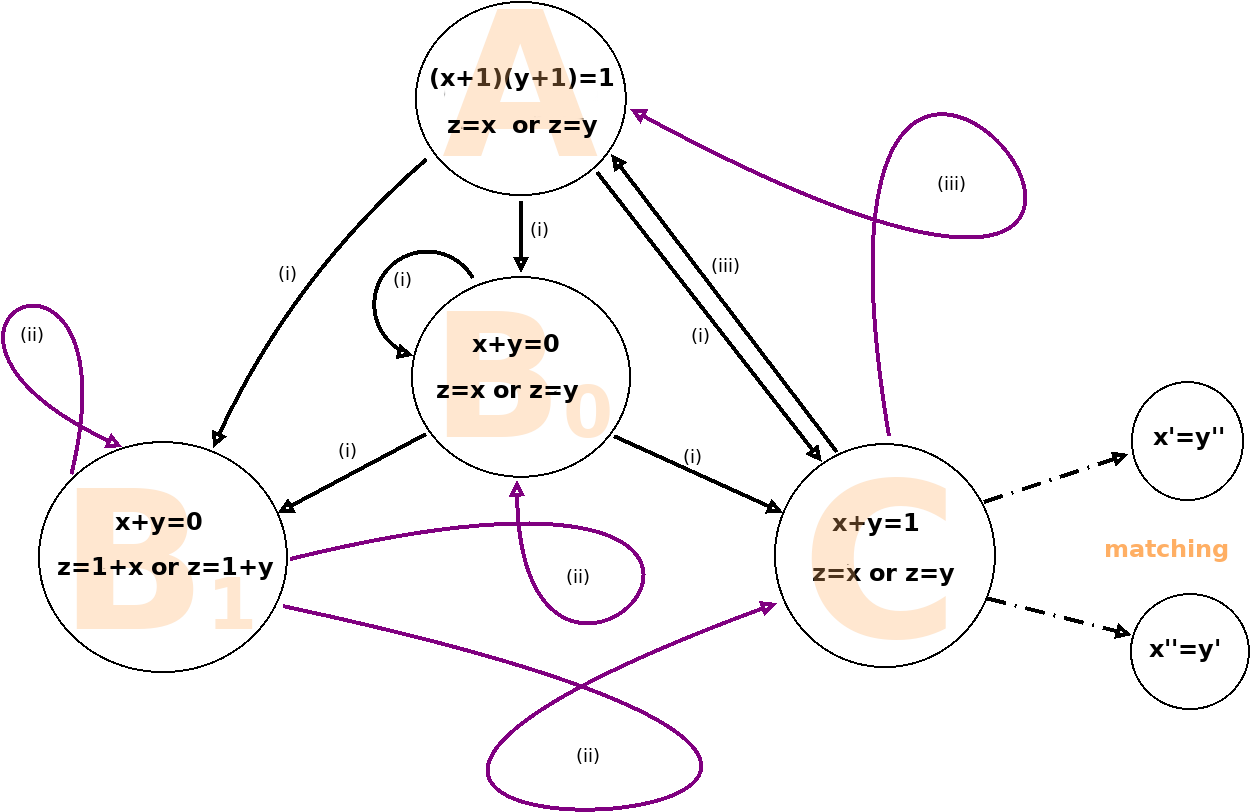}
\caption{A diagram summing up the content of Lemma~\ref{l:xyz} (cf.\ Figure~\ref{fig:diagsimple}). 
Each state is characterized by a condition which is  symmetric in $x$ and $y$, together with a binary option for $z$. Passing through a transition represented by a twisted arrow the coordinate $z$ is  iterated over $T_1^2$ instead of $T_1$ a single time and  $x$ and $y$ switch role instead of being preserved.} \label{fig:diagsimple4}
\end{figure}

\begin{proof}
This is an almost immediate consequence of Lemma~\ref{l:alphabeta}.
Note that $z-x = 1$ implies $z \ge \alpha$, and $y+z = 0$ implies $z \le 1-\alpha$. 
Therefore, we cannot have $z'-x' = 1$ and $y'+z' = 0$ in case~\ref{i:xyz1}.
Similarly, we cannot have $x'+z'' = 0$ and $z''-y' = 1$ in case~\ref{i:xyz2}.
In case~\ref{i:xyz3}, we have $1-\alpha < z < \alpha$; if $\frac{1}{2} < z \le \frac{1}{\alpha+1}$, then $z' \ge \alpha$ and thus $z'' \le \frac{1}{\alpha}-1$, hence we cannot have $z''-y'=1$. 
\end{proof}
Of course, one can duplicate each claim of Lemma~\ref{l:xyz} by switching the roles of $x$ and $y$ both in the hypotheses and in the thesis, see Figure~\ref{fig:diagsimple4}. We use now the notation
\begin{equation} \label{e:xyzn}
\tilde{x}_n = T_\alpha^n\big(\alpha^{(-1)^{r_n+1}}-1\big),\ \tilde{y}_n =
T_\alpha^n\big(\alpha^{(-1)^{r_n}}-1\big),\ \tilde{z}_n = T_1^{n+r_n+1}(\alpha),
\end{equation}
with 
\[
r_n = \#\bigg\{0\le k < n:\, \begin{array}{ll}\tilde{z}_k=1+\tilde{x}_k, \\ \tilde{x}_k+\tilde{y}_k=0,\end{array} \ \mbox{ or}\ \begin{array}{ll}\tilde{z}_k=\tilde{x}_k \in (\tfrac{1}{2}, \tfrac{1}{\alpha+1}], \\ \tilde{x}_k+\tilde{y}_k = 1\end{array}\bigg\}.
\]
Then we have $\tilde{x}_0 = \tilde{z}_0 = \frac{1}{\alpha}-1$, $\tilde{y}_0 = \alpha-1$.
We show that each pre-matching triple $(\tilde{x}_n,\tilde{y}_n,\tilde{z}_n)$ satisfies one of the relations
\[
\begin{array}{lll}
\mathrm{(\tilde{A})} & \tilde{z}_n=\tilde{x}_n, & (\tilde{x}_n +1)(\tilde{y}_n +1) = 1,\\
\mathrm{(\tilde{B}_0)} & \tilde{z}_n=\tilde{x}_n,& \tilde{x}_n+\tilde{y}_n = 0,\\
\mathrm{(\tilde{B}_1)} & \tilde{z}_n=1+\tilde{x}_n,& \tilde{x}_n+\tilde{y}_n=0,\\
\mathrm{(\tilde{C})} & \tilde{z}_n=\tilde{x}_n,& \tilde{x}_n+\tilde{y}_n = 1.
\end{array}
\]

\begin{lemma} \label{l:xyzn} 
Let $\alpha \in (g,1]$, $n \ge 0$.
If the triple $(\tilde{x}_n,\tilde{y}_n,\tilde{z}_n)$ is in state $\mathrm{(\tilde{A})}$, $\mathrm{(\tilde{B}_0)}$ or $\mathrm{(\tilde{B}_1)}$, then  $(\tilde{x}_{n+1},\tilde{y}_{n+1},\tilde{z}_{n+1})$ is in state $\mathrm{(\tilde{B}_0)}$, $\mathrm{(\tilde{B}_1)}$ or $\mathrm{(\tilde{C})}$. 
If $(\tilde{x}_n,\tilde{y}_n,\tilde{z}_n)$ is in state $\mathrm{(\tilde{C})}$, then we have $T_\alpha(\tilde{x}_n)=T_\alpha^2(\tilde{y}_n)$ when $\tilde{z}_n < \frac{\alpha}{\alpha+1}$, $T_\alpha^2(\tilde{x}_n)=T_\alpha(\tilde{y}_n)$ when $z > \frac{1}{\alpha+1}$, and $(\tilde{x}_{n+1},\tilde{y}_{n+1},\tilde{z}_{n+1})$ is in state~$\mathrm{(\tilde{A})}$ when $\frac{\alpha}{\alpha+1} \le \tilde{z}_n \le \frac{1}{\alpha+1}$.

If the triple $(\tilde{x}_n,\tilde{y}_n,\tilde{z}_n)$ is in state $\mathrm{(\tilde{A})}$, $\mathrm{(\tilde{B}_0)}$, $\mathrm{(\tilde{B}_1)}$ or $\mathrm{(\tilde{C})}$, then we have
\begin{align*}
r_{n+1} - r_n & = \begin{cases}
0 & \mbox{if $\mathrm{(\tilde{A})}$, $\mathrm{(\tilde{B}_0)}$, or $\mathrm{(\tilde{C})}$  and $\tilde{z}_n \notin (\tfrac{1}{2}, \tfrac{1}{\alpha+1}]$}, \\
1 & \mbox{if $\mathrm{(\tilde{B}_1)}$, or $\mathrm{(\tilde{C})}$ and $\tilde{z}_n \in (\tfrac{1}{2}, \tfrac{1}{\alpha+1}]$},\end{cases} \\
& = \#\{n+r_n+1 \le k \le n+r_{n+1}+1:\, T_1^k(\alpha) \ge \alpha\}.
\end{align*}
\end{lemma}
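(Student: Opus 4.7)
The plan is to case-split on which of the four states $\mathrm{(\tilde{A})}$, $\mathrm{(\tilde{B}_0)}$, $\mathrm{(\tilde{B}_1)}$, $\mathrm{(\tilde{C})}$ the triple $(\tilde{x}_n,\tilde{y}_n,\tilde{z}_n)$ occupies and, in each case, apply the corresponding part of Lemma~\ref{l:xyz} to $(x,y,z)=(\tilde{x}_n,\tilde{y}_n,\tilde{z}_n)$. The key bookkeeping observation is that the ``twist'' $r_{n+1}=r_n+1$ corresponds exactly to the outcomes of Lemma~\ref{l:xyz} in which $z$ is iterated twice under $T_1$ (the $z''$ conclusions); by definition~\eqref{e:xyzn} this twist simultaneously exchanges the roles of $\tilde{x}$ and $\tilde{y}$, so that $\tilde{x}_{n+1}=T_\alpha(\tilde{y}_n)$, $\tilde{y}_{n+1}=T_\alpha(\tilde{x}_n)$, $\tilde{z}_{n+1}=T_1^2(\tilde{z}_n)$, whereas a non-twist transition gives $\tilde{x}_{n+1}=T_\alpha(\tilde{x}_n)$, $\tilde{y}_{n+1}=T_\alpha(\tilde{y}_n)$, $\tilde{z}_{n+1}=T_1(\tilde{z}_n)$.

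For states $\mathrm{(\tilde{A})}$ and $\mathrm{(\tilde{B}_0)}$ (both with $z=x$), Lemma~\ref{l:xyz}(i) yields three non-twist possibilities, each landing in one of $\mathrm{(\tilde{B}_0)}$, $\mathrm{(\tilde{B}_1)}$, $\mathrm{(\tilde{C})}$. For state $\mathrm{(\tilde{B}_1)}$ (where $z=1+x$ and $x+y=0$), Lemma~\ref{l:xyz}(ii) gives three twist transitions; after exchanging the roles of $x$ and $y$ these again produce one of $\mathrm{(\tilde{B}_0)}$, $\mathrm{(\tilde{B}_1)}$, $\mathrm{(\tilde{C})}$. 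For state $\mathrm{(\tilde{C})}$, I would invoke Lemma~\ref{l:xyz}(iii) and split on the position of $\tilde{z}_n=\tilde{x}_n$: the extremal subcases $\tilde{z}_n<\tfrac{\alpha}{\alpha+1}$ and $\tilde{z}_n>\tfrac{1}{\alpha+1}$ give the two matching identities stated, while $\tilde{z}_n\in[\tfrac{\alpha}{\alpha+1},\tfrac{1}{2}]$ yields a non-twist transition to $\mathrm{(\tilde{A})}$ and $\tilde{z}_n\in(\tfrac{1}{2},\tfrac{1}{\alpha+1}]$ yields a twist transition to $\mathrm{(\tilde{A})}$.

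To verify the counting formula for $r_{n+1}-r_n$, I would compare $\tilde{z}_n=T_1^{n+r_n+1}(\alpha)$ (and, in the twist case, also $T_1(\tilde{z}_n)$) with the threshold~$\alpha$. In every non-twist state one has $\tilde{z}_n=\tilde{x}_n\in[0,\alpha)$, so the contribution is zero. In the twist case for $\mathrm{(\tilde{B}_1)}$, the relation $\tilde{z}_n=1+\tilde{x}_n$ forces $\tilde{z}_n\in[\alpha,1)$, and then $T_1(\tilde{z}_n)=\tfrac{1}{\tilde{z}_n}-1\le\tfrac{1}{\alpha}-1<\alpha$ (using $\alpha>g$); in the twist subcase of $\mathrm{(\tilde{C})}$, $\tilde{z}_n\in(\tfrac{1}{2},\tfrac{1}{\alpha+1}]\subset[0,\alpha)$ and $T_1(\tilde{z}_n)=\tfrac{1}{\tilde{z}_n}-1\in[\alpha,1)$. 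In each twist subcase exactly one of the two iterates is $\ge\alpha$, so the count equals one, as required.

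The main obstacle is purely organizational: keeping straight the exchange of $\tilde{x}$ and $\tilde{y}$ across a twist, and lining up the cutoffs $\tfrac{\alpha}{\alpha+1}$, $\tfrac{1}{2}$, $\tfrac{1}{\alpha+1}$ appearing in Lemma~\ref{l:xyz}(iii) with those in the definition of~$r_n$. No further dynamical input is required beyond the hypothesis $\alpha>g$, which enters precisely to ensure $\tfrac{1}{\alpha+1}<\alpha$ and $\tfrac{1}{\alpha}-1<\alpha$ and thus to rule out ambiguous cases at the twist boundaries.
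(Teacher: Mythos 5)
Your proposal is correct and follows essentially the same route as the paper: reduce each state transition to the corresponding case of Lemma~\ref{l:xyz}, identify the twist $r_{n+1}=r_n+1$ with the outcomes where $z$ is iterated twice (so that $\tilde{x}$ and $\tilde{y}$ swap), and verify the counting formula by comparing $\tilde{z}_n$ and $T_1(\tilde{z}_n)$ with $\alpha$ exactly as the paper does (using $\alpha>g$ to get $\tfrac{1}{\alpha+1}<\alpha$ and $\tfrac{1}{\alpha}-1<\alpha$).
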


\begin{proof}
The first equation for $r_{n+1} - r_n$ follows from the definition.
The second equation means that $\tilde{z}_n < \alpha$ when $r_{n+1} = r_n$, and either $\tilde{z}_n < \alpha$ or $T_1(\tilde{z}_n) < \alpha$ when $r_{n+1} = r_n+1$.
Indeed, $\mathrm{(\tilde{A})}$, $\mathrm{(\tilde{B}_0)}$ and $\mathrm{(\tilde{C})}$ imply that $\tilde{z}_n < \alpha$, $\mathrm{(\tilde{B}_1)}$ implies that $\tilde{z}_n \ge \alpha$ and thus $T_1(\tilde{z}_n) <  \alpha$, $\tilde{z}_n \in (\tfrac{1}{2}, \tfrac{1}{\alpha+1}]$ implies that $T_1(\tilde{z}_n) \ge \alpha$. 

If $r_{n+1}=r_n$, then $T_\alpha(\tilde{x}_n)=\tilde{x}_{n+1}$, $T_\alpha(\tilde{y}_n)=\tilde{y}_{n+1}$ and $T_1(\tilde{z}_n) = \tilde{z}_{n+1}$.
If $r_{n+1}=r_n+1$, then $T_\alpha(\tilde{x}_n)=\tilde{y}_{n+1}$, $T_\alpha(\tilde{y}_n)=\tilde{x}_{n+1}$ and $T_1^2(\tilde{z}_n) = \tilde{z}_{n+1}$. 
Therefore, the relations for $(\tilde{x}_n,\tilde{y}_n,\tilde{z}_n)$ follow from Lemma~\ref{l:xyz}. 
\end{proof}

Recall that 
\[
P_n = P_n(\alpha) = \min\{k \ge 0:\, T_1^{n-k}(\alpha) \le 1-\alpha \ \mbox{or}\ T_1^{n-k-1}(\alpha) \ge \alpha\}.
\]

\begin{lemma} \label{l:odd}
Let $\alpha \in (g,1]$. 
If $\mathrm{(\tilde{C})}$ holds for some $m \ge 1$ and there is no $n < m$ with $\mathrm{(\tilde{C})}$ and $\tilde{z}_n \in
(1-\alpha, \frac{\alpha}{\alpha+1}) \cup (\frac{1}{\alpha+1}, \alpha)$, then $P_{m+r_m+1}$ is odd.
\end{lemma}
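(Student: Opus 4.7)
I would prove that $P_N$ is odd (with $N := m+r_m+1$) by strong induction on~$m$, exploiting the very constrained state structure imposed by the hypothesis. Because every $(\tilde{C})$-visit at index $n<m$ satisfies $\tilde{z}_n \in [\tfrac{\alpha}{\alpha+1},\tfrac{1}{\alpha+1}]$, Lemma~\ref{l:xyzn} forces each such $(\tilde{C})$ to transition to~$(\tilde{A})$. Combined with the fact that state~$(\tilde{A})$ can only be reached from a preceding $(\tilde{C})$ (unless it is the initial state at index~$0$), this pins down the local shape of the state sequence near~$m$.

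First I would set up the position bookkeeping: $\tilde{z}_n$ occupies $T_1$-orbit position $n+r_n+1$, and an intermediate position of the form $T_1(\tilde{z}_n)$ appears between indices~$n$ and $n+1$ exactly when $s_{n+1}:=r_{n+1}-r_n=1$, i.e.\ when the state at $n$ is $(\tilde{B}_1)$ or $(\tilde{C})$ with $\tilde{z}_n\in(\tfrac{1}{2},\tfrac{1}{\alpha+1}]$. Then I would translate the definition of $P_N$ into a backward scan on the pairs $(T_1^{N-k-1}(\alpha),\,T_1^{N-k}(\alpha))$, and case-split on the state at index $m-1$. If this state is $(\tilde{B}_0)$, then $\tilde{z}_{m-1}\le 1-\alpha$ forces a stop at $k=1$; if it is $(\tilde{B}_1)$, the same happens because $\tilde{z}_{m-1}\ge \alpha$ sits at position $N-2$; and in the base case $m=1$, position~$0$ equals $\alpha\ge\alpha$, again giving $P_N=1$.

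In the remaining branch, state $m-1=(\tilde{A})$ forces $m\ge 3$ and state $m-2=(\tilde{C})$ with $\tilde{z}_{m-2}\in[\tfrac{\alpha}{\alpha+1},\tfrac{1}{\alpha+1}]$. Two subcases still give $P_N=1$: either $\tilde{z}_{m-1}\le 1-\alpha$, or $\tilde{z}_{m-2}\in(\tfrac{1}{2},\tfrac{1}{\alpha+1}]$, in which case $s_{m-1}=1$ and the intermediate $T_1(\tilde{z}_{m-2})=\tfrac{1}{\tilde{z}_{m-2}}-1\in[\alpha,1)$ lies at position $N-2$ and triggers an immediate stop. The only surviving branch is $\tilde{z}_{m-1}>1-\alpha$ together with $\tilde{z}_{m-2}\in[\tfrac{\alpha}{\alpha+1},\tfrac{1}{2}]$, where $s_{m-1}=s_m=0$ and $N-N'=2$ for $N':=(m-2)+r_{m-2}+1$. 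A direct check shows that neither of the pairs $(N-1,N)$ and $(N-2,N-1)$ can trigger a stop, so the backward scan continues seamlessly from~$N'$, giving $P_N=2+P_{N'}$. Since the hypothesis of the lemma obviously restricts from indices $n<m$ to indices $n<m-2$, the induction hypothesis applies at $m'=m-2$ and gives $P_{N'}$ odd, whence $P_N$ is odd.

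The main obstacle is the position-to-state bookkeeping under the two possible values of $s_{m-1}$, and making sure each sub-branch is unambiguously classified as either an early stop or a clean feed into the inductive relation $P_N=2+P_{N'}$. One auxiliary inequality worth singling out is $\tfrac{1-\alpha}{\alpha}<\alpha$, valid for $\alpha>g$, which is what guarantees that the intermediate point following a $(\tilde{B}_1)$-state sits strictly below~$\alpha$ and therefore cannot itself cause a spurious early stop through the second alternative $T_1^{\cdot}(\alpha)\ge\alpha$.
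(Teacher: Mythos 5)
Your plan is correct and follows essentially the same route as the paper's proof: a case analysis on the state at index $m-1$ (with $(\tilde{\mathrm{B}}_0)$, $(\tilde{\mathrm{B}}_1)$ and the early-stop subcases yielding $P_{m+r_m+1}=1$), and in the remaining branch the two-step recursion $P_{m+r_m+1}=P_{(m-2)+r_{m-2}+1}+2$ through the forced $(\tilde{\mathrm{C}})\to(\tilde{\mathrm{A}})\to(\tilde{\mathrm{C}})$ pattern, closed by induction. Your treatment is in fact slightly more explicit than the paper's in handling the base case $m=1$ and in separating the subcase $\tilde{z}_{m-1}\le 1-\alpha$, which the paper absorbs into the alternative ``$P=1$ or $P=P_{m+r_m-1}+2$''.
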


\begin{proof}
Let $z_n = T_1^n(\alpha)$ for $n \ge 0$. 
Since $(\tilde{\mathrm{C}})$ holds for $n = m$, we have $\tilde{z}_m=z_{m+r_m+1} > 1-\alpha$ and $(\tilde{\mathrm{A}})$, $(\tilde{\mathrm{B}}_0)$ or $(\tilde{\mathrm{B}}_1)$ for $n = m-1$. 
In case $(\tilde{\mathrm{B}}_0)$, we have $\tilde{z}_{m-1}=z_{m+r_{m-1}+1}=z_{m+r_m} \le 1-\alpha$, thus $P_{m+r_m+1} =1$. 
In case $(\tilde{\mathrm{B}}_1)$, we have $\tilde{z}_{m-1}=z_{m+r_{m-1}+1}=z_{m+r_m-1} \ge \alpha$ and so $z_{m+r_m} < \alpha$, thus $P_{m+r_m+1} =1$. 
Assume now that $(\tilde{\mathrm{A}})$ holds for $n=m-1$.
Then we have $\tilde{z}_{m-1}=z_{m+r_{m-1}+1}=z_{m+r_m} < \alpha$, thus $P_{m+r_m+1} \ge 1$, and $(\tilde{\mathrm{C}})$ holds for $n=m-2$.
If $\frac{1}{2} < \tilde{z}_{m-2} \le \frac{1}{\alpha+1}$, then $z_{m+r_m-1} \ge \alpha$, thus $P_{m+r_m+1} =1$. 
If $\frac{\alpha}{\alpha+1} \le \tilde{z}_n \le \frac{1}{2}$, then $P_{m+r_m+1} = 1$ or $P_{m+r_m+1} = P_{m+r_m-1}+2 = P_{m-2+r_{m-2}+1}+2$. 
As $(\tilde{\mathrm{C}})$ holds for $n=m-2$, we obtain inductively that $P_{m+r_m+1}$ is odd.
\end{proof}

We can now prove that the sets in~\eqref{e:bifurcalpha} and~\eqref{e:bifurc1} are equal; we also obtain an alternative statement of~\eqref{e:bifurc1}. 

\begin{lemma} \label{l:bifurcalpha1}
We have
\begin{align*}
\TEE & = \big\{\alpha \in [g,1]:\, T_1^n(\alpha) \notin (\tfrac{1}{\alpha+1}, \alpha) \ \mbox{for all}\ n \ge 2\ \mbox{and} \\
& \hspace{6.7em} T_1^n(\alpha) \notin (1-\alpha, \tfrac{\alpha}{\alpha+1})\ \mbox{for all}\ n \ge 2\ \mbox{such that $P_n$ is odd}\big\} \\
& = \big\{\alpha \in [g,1]:\, T_1^n(\alpha) \notin (1-\alpha, \tfrac{\alpha}{\alpha+1}) \cup (\tfrac{1}{\alpha+1}, \alpha) \\ 
& \hspace{6.7em} \mbox{for all}\ n \ge 2\ \mbox{such that $P_n$ is odd}\big\}.
\end{align*}
\end{lemma}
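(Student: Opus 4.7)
My strategy is to establish the chain of inclusions
\[\tilde{\EE}\subseteq\text{(first set)}\subseteq\text{(second set)}\subseteq\tilde{\EE};\]
combined with Lemma~\ref{lem:X} this yields both claimed equalities. The inclusion (first set)$\subseteq$(second set) is immediate: for every $n\ge 2$ with $P_n$ odd, condition (i) of the first set forbids $T_1^n(\alpha)\in(\tfrac{1}{\alpha+1},\alpha)$ while (ii) forbids $T_1^n(\alpha)\in(1-\alpha,\tfrac{\alpha}{\alpha+1})$, so the union is avoided.

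For (second set)$\subseteq\tilde{\EE}$ I argue by contrapositive. Given $\alpha\in[g,1]\setminus\tilde{\EE}$, take the smallest $m$ (necessarily $m\ge 1$, since $\tilde{x}_0,\tilde{y}_0\le\tfrac{1}{\alpha+1}$ for $\alpha\ge g$) such that $\tilde{x}_m>\tfrac{1}{\alpha+1}$ or $\tilde{y}_m>\tfrac{1}{\alpha+1}$. By Lemma~\ref{lem:beforematching} we have $\tilde{x}_m+\tilde{y}_m=1$, so the triple is in state $\mathrm{(\tilde{C})}$ at $m$ with $\tilde{z}_m=\tilde{x}_m\in(1-\alpha,\tfrac{\alpha}{\alpha+1})\cup(\tfrac{1}{\alpha+1},\alpha)$. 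Minimality of $m$ forces every earlier $\mathrm{(\tilde{C})}$-instance to satisfy $\tilde{x}_n,\tilde{y}_n\in[\tfrac{\alpha}{\alpha+1},\tfrac{1}{\alpha+1}]$ (both summands of $1$ stay $\le\tfrac{1}{\alpha+1}$), so the corresponding $\tilde{z}_n$ avoids the bad union. Lemma~\ref{l:odd} then gives $P_{m+r_m+1}$ odd, and since $m+r_m+1\ge 2$ this witnesses that $\alpha$ fails the second-set condition.

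The remaining inclusion $\tilde{\EE}\subseteq$ (first set) demands (i) and (ii). For (i), by Lemma~\ref{l:xyzn} the triple stays forever in one of the four states; a direct check gives $\tilde{z}_n\le\tfrac{1}{\alpha+1}$ in states $\mathrm{(\tilde{A})},\mathrm{(\tilde{B}_0)},\mathrm{(\tilde{C})}$ (using the defining bound of $\tilde{\EE}$) and $\tilde{z}_n\ge\alpha$ in $\mathrm{(\tilde{B}_1)}$; moreover the skipped Gauss-orbit values $T_1(\tilde{z}_n)$ lie in $(0,\tfrac{1-\alpha}{\alpha}]\subseteq[0,\tfrac{1}{\alpha+1}]$ (from $\mathrm{(\tilde{B}_1)}$, using $\alpha\ge g$) or in $[\alpha,1)$ (from $\mathrm{(\tilde{C})}$ with $\tilde{z}_n>\tfrac{1}{2}$). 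Since $\{T_1^k(\alpha):k\ge 1\}$ is exhausted by these two families, the interval $(\tfrac{1}{\alpha+1},\alpha)$ is avoided. For (ii) I prove the contrapositive: if $T_1^k(\alpha)\in(1-\alpha,\tfrac{\alpha}{\alpha+1})$ then $P_k$ is even. Such a bad value can only occur as $\tilde{z}_m$ in state $\mathrm{(\tilde{A})}$ or as a skipped $T_1(\tilde{z}_m)$ coming from $\mathrm{(\tilde{B}_1)}$. The skipped case immediately gives $P_k=0$ because $T_1^{k-1}(\alpha)=\tilde{z}_m\ge\alpha$. In the $\mathrm{(\tilde{A})}$-case, state $\mathrm{(\tilde{A})}$ at $m\ge 2$ must be the successor of state $\mathrm{(\tilde{C})}$ at $m-1$; Lemma~\ref{l:odd} applies at $m-1$ (no earlier $\mathrm{(\tilde{C})}$ has bad $\tilde{z}_n$ in $\tilde{\EE}$) and delivers an odd $P_{K'}$ for $K'=(m-1)+r_{m-1}+1$. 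Depending on whether $\tilde{z}_{m-1}\in[\tfrac{\alpha}{\alpha+1},\tfrac{1}{2}]$ or $\tilde{z}_{m-1}\in(\tfrac{1}{2},\tfrac{1}{\alpha+1}]$, the new index $k=m+r_m+1$ equals $K'+1$ or $K'+2$. In the first subcase a direct check (the $j=0$ condition fails for $k$, while the $j\ge 1$ conditions for $k$ coincide with the $j-1$ conditions for $K'$) yields $P_k=P_{K'}+1$; in the second subcase $T_1^{k-1}(\alpha)=T_1(\tilde{z}_{m-1})\ge\alpha$ gives $P_k=0$. Both outcomes are even, completing (ii).

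The main obstacle is the parity analysis for (ii): one must recognize that each $\mathrm{(\tilde{A})}$-state capable of witnessing a violation is necessarily preceded by a $\mathrm{(\tilde{C})}$-state to which Lemma~\ref{l:odd} applies, and then track carefully how $P_k$ shifts by $1$ or $2$ when $k$ is incremented relative to the $\mathrm{(\tilde{C})}$-index. The remaining case analyses for (i) and for the (second set)$\subseteq\tilde{\EE}$ step are routine once the states and transitions of Lemmas~\ref{l:xyz} and~\ref{l:xyzn} are in hand.
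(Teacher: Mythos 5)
Your proof is correct and follows essentially the same route as the paper: the inclusion into $\TEE$ uses the first crossing of $\tfrac{1}{\alpha+1}$, state $\mathrm{(\tilde{C})}$ and Lemma~\ref{l:odd} exactly as in the paper, and your verification that $\alpha\in\TEE$ satisfies the first-set conditions reproduces the paper's analysis of the states of Lemma~\ref{l:xyzn}, the skipped Gauss indices, and the parity bookkeeping $P_k=P_{K'}+1$ or $P_k=0$. The only difference is organizational (a direct check over all orbit indices rather than the paper's minimal violating index), which does not change the substance.
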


\begin{proof}
Let $x_n, y_n$ be as in \eqref{eq:xix}, $z_n = T_1^n(\alpha)$, and $\tilde{x}_n,\tilde{y}_n,\tilde{z}_n$ as in~\eqref{e:xyzn}. 
Then the unordered pair $\{x_n,y_n\}$ is equal to $\{\tilde{x}_n,\tilde{y}_n\}$ for all $n \ge 0$.

Suppose first that $\max(x_m,y_m) > \frac{1}{\alpha+1}$ for some $m \ge 0$, and let $m$ be minimal with this property. 
Then $\tilde{x}_m + \tilde{y}_m = x_m + y_m = 1$ by Lemma~\ref{lem:beforematching}, thus $\mathrm{(\tilde{C})}$ holds for $n=m$ and $\tilde{z}_m \in (1-\alpha, \frac{\alpha}{\alpha+1}) \cup (\frac{1}{\alpha+1}, \alpha)$.
By Lemma~\ref{l:odd}, $P_{m+r_m+1}$ is odd.
Therefore, the sets on the right hand side are contained subsets of~$\TEE$.

For the opposite inclusions, assume that, for some $n\ge 2$, $\frac{1}{\alpha+1} < z_n < \alpha$, or $1-\alpha < z_n < \frac{\alpha}{\alpha+1}$ and $P_n$ is odd, and let $n$ be minimal with this property.
Then there is some $m \ge 1$ such that $n=m+r_m+1$, i.e., $z_n = \tilde{z}_m$. Indeed, if there was no such $m$, then we had $n = m+r_m+2$ and $r_{m+1}-r_m=1$ for some~$m$.
Then $\mathrm{(\tilde{B}_1)}$ would hold for~$m$, or $\mathrm{(\tilde{C})}$ would hold for~$m$ and $\frac{1}{2} < \tilde{z}_m \le \frac{1}{\alpha+1}$.
We cannot have $\frac{1}{2} < \tilde{z}_m \le \frac{1}{\alpha+1}$ since this would imply $T_1(\tilde{z}_m) = z_n \ge \alpha$.
In case $\mathrm{(\tilde{B}_1)}$ for~$m$, we have $\tilde{z}_m \ge \alpha$ and $T_1(\tilde{z}_m) = z_n \le \frac{1}{\alpha}-1 < \frac{1}{\alpha+1}$, thus $P_n = 0$, also contradicting the assumptions on~$z_n$. 
Therefore, we have $z_n = \tilde{z}_m$.

If $\frac{1}{\alpha+1} < \tilde{z}_m < \alpha$, then we have $(\tilde{\mathrm{C}})$ for~$m$, thus $\tilde{x}_m = \tilde{z}_m$, hence $\alpha \not\in \TEE$.
If $1-\alpha < \tilde{z}_m < \frac{\alpha}{\alpha+1}$, then we have $\mathrm{(\tilde{A})}$ or $\mathrm{(\tilde{C})}$ for~$m$. 
In case~$\mathrm{(\tilde{C})}$, we have $\tilde{y}_m = 1-\tilde{z}_m > \frac{1}{\alpha+1}$, hence $\alpha \not\in \TEE$.
Suppose finally that $\mathrm{(\tilde{A})}$ holds for~$m$ and $P_n$ is odd.
Then we have $\mathrm{(\tilde{C})}$ for $m-1$, with $\frac{\alpha}{\alpha+1} \le \tilde{z}_{m-1} \le \frac{1}{2}$ because $P_n \ne 0$.
Then $P_{m-1+r_{m-1}+1} = P_{m+r_m} = P_{n-1}$ is odd by Lemma~\ref{l:odd}; since $P_n = P_{n-1}+1$, this contradicts that $P_n$ is odd.
Therefore, $\TEE$~is contained in the sets on the right hand side.
\end{proof}

\begin{proof}[Proof of Theorem \ref{thm:charactEIT}]
In the proof of Theorem \ref{thm:matchingae} we have already shown that $\EIT \cap [g,1] = \TEE$.
By Lemmas~\ref{lem:X} and~\ref{l:bifurcalpha1}, we obtain the other equations.
\end{proof}

We can also describe the matching exponents and matching intervals in terms of regular continued fractions. 
Here, the \emph{pseudocenter} of an interval denotes the rational number with smallest denominator contained in the interval. 
We write $\overline{a_1,\dots,a_n}$ for the periodic sequence $a_1,a_2,\dots$ with $a_{i+n}=a_i$ for all $i \ge 1$. 

\begin{proposition} \label{p:matching1}
Let $\alpha = [0;1,a_2,a_3,\ldots\,] \in (g,1] \setminus \EIT$.
Let $n \ge 2$ be minimal such that $1-\alpha < T_1^n(\alpha) < \frac{\alpha}{\alpha+1}$ and $P_n(\alpha)$ is odd, or $\frac{1}{\alpha+1} < T_1^n(\alpha) < \alpha$.
Then the matching interval containg $\alpha$ has the endpoints 
\[
\begin{cases}[0;1,\overline{a_2,\ldots,a_n,2}] \ \mbox{and}\  [0;1,a_2,\overline{a_3,\ldots,a_n,a_2{+}1}] & \mbox{if}\ 1-\alpha < T_1^n(\alpha) < \frac{\alpha}{\alpha+1}, \\ [0;\overline{1,a_2,\ldots,a_n}]\ \mbox{and} \ [0;\overline{1,a_2,\ldots,a_n,1}] & \mbox{if}\ \frac{1}{\alpha+1} < T_1^n(\alpha) < \alpha,\end{cases}
\]
the matching exponents
\[
\begin{cases}M = N-2 = n-R_n & \mbox{if}\ (-1)^{R_n}\,(T_1^n(\alpha)-\frac{1}{2}) > 0, \\  M = N = n-R_n+1 & \mbox{if}\ (-1)^{R_n}\,(T_1^n(\alpha)-\frac{1}{2}) < 0,\end{cases} 
\]
with $R_n = \#\{1 \le k < n:\, T_1^k(\alpha) \ge \alpha\}$, and the pseudocenter
\[
\begin{cases} [0;1,a_2,\ldots,a_n,2,a_2,\dots,a_L,1] & \mbox{if}\ 1-\alpha < T_1^n(\alpha) < \frac{\alpha}{\alpha+1}, \\ [0;1,a_2,\ldots,a_n,1,a_2,\dots,a_L,2] & \mbox{if}\ \frac{1}{\alpha+1} < T_1^n(\alpha) < \alpha,\end{cases}
\]
with $L = \min\{k \ge 1:\, a_{k+1} \ne 1\}$.
\end{proposition}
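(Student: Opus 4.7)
The plan is to translate the $T_\alpha$-description of the matching interval given by Proposition~\ref{p:atmatching} into the language of regular continued fractions, using the triple $(\tilde{x}_n,\tilde{y}_n,\tilde{z}_n)$ of~\eqref{e:xyzn} as the dictionary between the $T_\alpha$-clock and the $T_1$-clock.

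First I would establish the relation $m=n-R_n-1$ between the $T_\alpha$-matching time $m$ of Proposition~\ref{p:atmatching} and the $T_1$-time $n$ of the statement. Summing the telescoping identity of Lemma~\ref{l:xyzn} gives $r_m=\sum_{k=0}^{m-1}(r_{k+1}-r_k)=\#\{1\le k\le m+r_m:T_1^k(\alpha)\ge\alpha\}$, because the ranges on the right are disjoint and cover $\{1,\dots,m+r_m\}$; hence $r_m=R_n$ and $n=m+R_n+1$. The sign $\epsilon\in\{-1,1\}$ is then determined by whether $\tilde{x}_m$ equals $x_m$ or $y_m$ (parity of $R_n$) together with which of $x_m,y_m$ exceeds $\frac{1}{\alpha+1}$, i.e.\ on which side of $\tfrac12$ the value $\tilde{z}_m=T_1^n(\alpha)$ lies. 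A four-way case split yields $\epsilon=1$ iff $(-1)^{R_n}(T_1^n(\alpha)-\tfrac12)<0$; plugging $m=n-R_n-1$ into Proposition~\ref{p:atmatching}(i) then produces the matching exponents stated in the proposition.

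Next I would obtain the endpoints from Proposition~\ref{p:atmatching}(iii). At $\alpha_1$ one has $T_{\alpha_1}^m(\alpha_1^\epsilon-1)=\frac{1}{\alpha_1+1}$, and since $T_\alpha(\frac{1}{\alpha+1})=\alpha-1$ this means the orbit returns to $\alpha_1-1$ after $m+1$ steps, giving a purely periodic $T_{\alpha_1}$-expansion; at $\alpha_2$ the condition $\lim T_z^m(z^\epsilon-1)=\alpha_2$ encodes the asymptotic closing of the orbit, giving a different periodic expansion. To convert these to $T_1$-expansions of $\alpha_1$ and $\alpha_2$, I would read off the digits along the orbit by tracing through the transitions of Lemma~\ref{l:xyzn}: a non-swap step corresponds to a single RCF digit, while a swap step ($r_{k+1}=r_k+1$, occurring at state $(\tilde B_1)$ or at $(\tilde C)$ with $\tilde z_k\in(\tfrac12,\tfrac1{\alpha+1}]$) corresponds to two RCF digits. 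Carrying out this translation, and remembering that the initial swap encoded in $\epsilon$ controls whether the periodic word begins or is delayed by $a_2$, produces the four expressions in the two cases $\frac{1}{\alpha+1}<T_1^n(\alpha)<\alpha$ and $1-\alpha<T_1^n(\alpha)<\frac{\alpha}{\alpha+1}$ exactly as claimed (periods $\overline{1,a_2,\ldots,a_n}$ or $\overline{1,a_2,\ldots,a_n,1}$ in the second case, and $\overline{a_2,\ldots,a_n,2}$ or $\overline{a_3,\ldots,a_n,a_2+1}$ in the first).

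Finally, the pseudocenter of the matching interval is extracted by a Stern--Brocot mediant computation. The two periodic endpoints share the prefix $[0;1,a_2,\ldots,a_n]$ and first differ at digit $n+1$ (a $1$ vs.\ a $2$), so the mediant-with-smallest-denominator is obtained by appending the shortest block that lands strictly between the two expansions. The minimal such tail must continue by matching the repeating block of one endpoint up to the first index $L$ at which $a_{L+1}\ne 1$, and then close with the opposite terminal digit ($1$ against $2$); this produces $[0;1,a_2,\ldots,a_n,2,a_2,\ldots,a_L,1]$ and $[0;1,a_2,\ldots,a_n,1,a_2,\ldots,a_L,2]$ in the two cases, whose minimality among denominators in $J$ follows from standard Stern--Brocot comparison. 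The most delicate step will be Step 2: bookkeeping the digit shifts induced by state swaps in Lemma~\ref{l:xyzn}, keeping track of $\epsilon$, and accounting for the half-step in the period at the $\frac{1}{\alpha+1}$-boundary (which gives the $+1$ or $2$ terminating the period). Once this symbolic correspondence is correctly set up, the endpoints and matching exponents drop out, and Step 3 is a short combinatorial check.
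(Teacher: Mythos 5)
Your Step~1 is essentially the paper's argument and is correct: the telescoping of Lemma~\ref{l:xyzn} gives $r_m=\#\{1\le k\le m+r_m: T_1^k(\alpha)\ge\alpha\}=R_n$, hence $m=n-R_n-1$, and the four-way case split on the parity of $r_m$ and the position of $\tilde z_m$ relative to $\tfrac12$ (equivalently, which of $x_m,y_m$ exceeds $\frac{1}{\alpha+1}$) recovers $\epsilon$ and thus the exponents via Proposition~\ref{p:atmatching}(i). The gap is in Steps~2 and~3, which you explicitly defer (``carrying out this translation \dots produces \dots exactly as claimed'', ``a short combinatorial check''): these are the substance of the proposition, and as written nothing is actually verified. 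Worse, the route you propose for Step~2 --- reading off RCF digits by tracing the $T_\alpha$-orbit through the state machine and bookkeeping the swap steps --- is considerably harder than necessary and error-prone (note, e.g., that the two endpoints in the case $\frac{1}{\alpha+1}<T_1^n(\alpha)<\alpha$ do \emph{not} first differ at digit $n+1$ as you assert). The paper's mechanism is much more direct: by Proposition~\ref{p:atmatching}(iii) and the dictionary $\tilde x_m=\tilde z_m=T_1^n(\alpha)$, $\tilde y_m=1-\tilde z_m$, the endpoints are exactly the $\tilde\alpha$ with the same first $n$ partial quotients as $\alpha$ and $T_1^n(\tilde\alpha)$ equal to one of the four boundary values $\tilde\alpha$, $\frac{1}{\tilde\alpha+1}$, $\frac{\tilde\alpha}{\tilde\alpha+1}$, $1-\tilde\alpha$; one then just checks, using the explicit expansions $\frac{1}{\alpha+1}=[0;1,1,a_2,\dots]$, $\frac{\alpha}{\alpha+1}=[0;2,a_2,\dots]$, $1-\alpha=[0;a_2{+}1,a_3,\dots]$ from~\eqref{e:conv}, that the four stated periodic continued fractions solve these fixed-point equations. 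No orbit tracing is needed.

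For the pseudocenter, ``standard Stern--Brocot comparison'' is not enough as stated: you must use that $L<n$ and that $L$ is \emph{odd} (a consequence of $\alpha>g$), since the parity of $L$ determines the direction of the continued-fraction inequalities that sandwich the candidate rational between the two endpoints; and in the case $1-\alpha<T_1^n(\alpha)<\frac{\alpha}{\alpha+1}$ a separate treatment of $a_2\ge2$ (where $L=1$ and the tail degenerates to $[0;\dots,2,1]$) is required. So the plan is headed in the right direction and its first step is sound, but the endpoint identification and the pseudocenter minimality --- the actual content of the proposition --- remain unproved in your write-up.
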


\begin{proof}
Let us first prove the formulae for the matching exponents. From the proof of Lemma~\ref{l:bifurcalpha1}, we know that $n = m+r_m+1$ for some $m \ge 0$, with $\tilde{x}_m=\tilde{z}_m$ and $\tilde{y}_m+\tilde{z}_m = 1$, thus by Lemma~\ref{l:xyzn} we find $T_\alpha(\tilde{x}_m) = T_\alpha^2(\tilde{y}_m)$ when $1-\alpha < z_n < \frac{\alpha}{\alpha+1}$, $T_\alpha^2(\tilde{x}_m) = T_\alpha(\tilde{y}_m)$ when $\frac{1}{\alpha+1} < z_n < \alpha$. 
We have $\tilde{x}_m = x_m$, $\tilde{y}_m = y_m$ if $r_m$ is even, $\tilde{x}_m = y_m$, $\tilde{y}_m = x_m$ if $r_m$ is odd.
Therefore the matching exponents are $M=N=m+2$ if $r_m$ is even and $z_n < \frac{\alpha}{\alpha+1} \le 1/2$, or if $r_m$ is odd and $z_n > \frac{1}{\alpha+1} \ge 1/2$, i.e., when $(-1)^{r_m} (z_n-\frac{1}{2}) < 0$.
Similarly, the matching exponents are $M=m+1$, $N=m+3$, when $(-1)^{r_m} (z_n-\frac{1}{2}) > 0$.
By Lemma~\ref{l:xyzn}, we have 
\[
r_m = r_m - r_0 = \#\{1 \le k \le m+r_m:\, T_1^k(\alpha) \ge \alpha\} = R_n,
\]
thus $n=m+R_n+1$. This gives the formulae for the matching exponents.

For the endpoints of the intervals, Proposition~\ref{p:atmatching} implies that we have to determine $\tilde{\alpha}$ with the same first $n$ partial quotients as~$\alpha$ and $T_1^n(\tilde{\alpha}) = \tilde{\alpha}$, $T_1^n(\tilde{\alpha}) = \frac{1}{\tilde{\alpha}+1}$, $T_1^n(\tilde{\alpha}) = \frac{\tilde{\alpha}}{\tilde{\alpha}+1}$, $T_1^n(\tilde{\alpha}) = 1-\tilde{\alpha}$ respectively. 
Since 
\begin{equation} \label{e:conv}
\begin{split}
\frac{1}{\alpha+1} & = [0;1,1,a_2,a_3,\ldots], \quad \frac{\alpha}{\alpha+1} = \frac{1}{1+\frac{1}{\alpha}} =  [0;2,a_2,a_3,\ldots], \\ 
1-\alpha & = \frac{1}{1+\frac{1}{\frac{1}{\alpha}-1}} = [0;a_2{+}1,a_3,a_4,\ldots],
\end{split}
\end{equation}
we have
\begin{align*}
T_1^n([0;\overline{1,a_2,\ldots,a_n}]) & = [0;\overline{1,a_2,\ldots,a_n}], \\
T_1^n([0;\overline{1,a_2,\ldots,a_n,1}]) & = [0;\overline{1,1,a_2,\ldots,a_n}] = \frac{1}{[0;\overline{1,a_2,\ldots,a_n,1}]+1}, \\
T_1^n([0;1,\overline{a_2,\ldots,a_n,2}]) & = [0;\overline{2,a_2,\ldots,a_n}] = \frac{[0;1,\overline{a_2,\ldots,a_n,2}]}{[0;1,\overline{a_2,\ldots,a_n,2}]+1}, \\
T_1^n([0;1,a_2,\overline{a_3,\ldots,a_n,a_2{+}1}]) & = [0;\overline{a_2{+}1,a_3,\ldots,a_n}] \\ 
& = 1 - [0;1,a_2,\overline{a_3,\ldots,a_n,a_2{+}1}].
\end{align*}

If $\frac{1}{\alpha+1} < T_1^n(\alpha) < \alpha$, then the regular continued fraction expansion of any $\tilde{\alpha}$ in the matching interval starts with $1,a_2,\dots,a_n,1,a_2,\dots,a_L$.
Note that $L < n$ and $L$ is odd since $\alpha > g$.
Therefore, we have 
\begin{gather*}
[0;1,a_2,\ldots,a_L,1] < [0;\overline{1,1,a_2,\ldots,a_n}] < [0;1,a_2,\ldots,a_L,2] \\
< [0;\overline{1,a_2,\ldots,a_n}] < [0;1,a_2,\ldots,a_L],
\end{gather*}
which implies that $[0;1,a_2,\ldots,a_n,1,a_2,\dots,a_L,2]$ is the rational number with smallest denominator in the matching interval of~$\alpha$. 

Let now $1-\alpha < T_1^n(\alpha) < \frac{\alpha}{\alpha+1}$.
If $a_2=1$, then the regular continued fraction expansion in the matching interval starts with $1,a_2,\dots,a_n,2,a_3,\dots,a_L$. 
Since
\begin{gather*}
[0;2,a_3,\dots,a_L] < [0;\overline{2,a_3,\ldots,a_n}] < [0;2,a_3,\dots,a_L,2] \\
= [0;2,a_2,\dots,a_L,1] < [0;\overline{2,a_2,\ldots,a_n}] < [0;2,a_2,\dots,a_L],
\end{gather*}
the pseudocenter is $[0;1,a_2,\ldots,a_n,2,a_2,\dots,a_L,1]$.
If $a_2 \ge 2$, then $L=1$ and 
\[
0 < [0;\overline{a_2{+}1,a_3,\ldots,a_n}] < [0;3] = [0;2,1] < [0;\overline{2,a_2,\ldots,a_n}] < [0;2],
\]
thus the pseudocenter is $[0;1,a_2,\ldots,a_n,2,1]$.
\end{proof}

\section{Dimensional results for $\EE$ }\label{sec:dim}

Now that we established several characterisations of $\EIT$ we will focus on dimensional results  of $\EIT$ in this section. We make use of two sets and the following proposition.

\begin{proposition}\label{p:FnCn} 
Let us consider the sets\footnote{The sets $H_n$ are sometimes referred to as \emph{high type} numbers.}
\begin{align*}
H_n & =\{x\in[0,1]\,:\,  x=[0;a_1,a_2,\ldots] \text{ such that } a_k \geq n \text{ for all } k\ge 1\}, \\
C_n & = \{x\in[0,1]\,:\,  x=[0;a_1,a_2,\ldots] \text{ and } a_k a_{k+1} \cdots a_{k+n-1} \neq 1 \text{ for all } k\ge 1 \},
\end{align*}
where $[0;a_1,a_2,\ldots]$ denotes the regular continued fraction.
For these sets we have $\dim_H(H_n)>\frac{1}{2}$, $\dim_H(C_n)<1$, and $\lim_{n \to \infty} \dim_H(C_n)= 1$.
\end{proposition}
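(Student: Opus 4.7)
The three claims concern Hausdorff dimensions of limit sets of conformal iterated function systems (IFS) built from the inverse branches $\psi_k(x) = 1/(k+x)$ of the Gauss map $T_1$, so my overall plan is to estimate them via Bowen's formula for finite sub-IFS together with the Mauldin--Urbanski thermodynamic formalism for infinite ones.

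For $\dim_H(H_n) > 1/2$, I realize $H_n$ as the attractor (up to countably many rationals) of the infinite conformal IFS $\{\psi_k : k \ge n\}$, whose branches have bounded distortion and derivatives of order $1/k^2$. The finite truncation $\{\psi_k : n \le k \le N\}$ has an attractor $H_n^{(N)} \subset H_n$ whose Hausdorff dimension equals the unique zero $s_N$ of a pressure function comparable to $s \mapsto \log \sum_{k=n}^N 1/k^{2s}$. At $s = 1/2$ this equals $\log \sum_{k=n}^N 1/k$, which diverges as $N \to \infty$. Hence the pressure at $1/2$ is positive for large $N$, forcing $s_N > 1/2$, and therefore $\dim_H(H_n) \ge s_N > 1/2$.

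For $\dim_H(C_n) < 1$, I view $C_n$ as the survivor set of the Gauss map after puncturing the cylinder $J_n := \{x : a_1 = \cdots = a_n = 1\}$, which is an interval of positive Lebesgue measure. Equivalently, $C_n$ is the limit set of the infinite IFS obtained from $\{\psi_k\}_{k \ge 1}$ by forbidding the word $1^n$ in allowed compositions. By Mauldin--Urbanski, $\dim_H(C_n)$ is the unique zero of the associated pressure function $P_n$. Since the full system satisfies $P(1) = 0$ and we have removed a cylinder of positive Gauss measure, a standard escape-rate (spectral gap) argument for the restricted transfer operator yields the strict inequality $P_n(1) < 0$, so the zero of $P_n$ lies strictly below $1$.

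For $\lim_n \dim_H(C_n) = 1$, note first that $C_n \subseteq C_{n+1}$ (a longer forbidden pattern is a weaker constraint), so the dimensions form a nondecreasing sequence. To identify its limit as $1$, fix $N \ge 2$ and consider the finite sub-IFS of $\{\psi_k : 1 \le k \le N\}$ whose allowed words avoid $1^n$; its attractor is contained in $C_n$. As $n \to \infty$ with $N$ fixed, continuity of Bowen's zero for subshifts of finite type shows that this dimension approaches that of the full-shift attractor $\{x : a_k \le N\ \forall k\}$, which by a classical result of Hensley tends to $1$ as $N \to \infty$. A diagonal argument concludes. The principal technical obstacle is the strict inequality $P_n(1) < 0$ in part~(2): it requires a genuine spectral-gap/escape-rate estimate for the Gauss dynamics with hole $J_n$ rather than merely the observation that $J_n$ has positive Gauss measure. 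Parts~(1) and~(3) then reduce to standard finite-IFS computations and continuity of Bowen's zero.
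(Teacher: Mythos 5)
Your proposal is correct in outline, but it reaches all three conclusions by a genuinely different route than the paper. For $\dim_H(H_n)>\tfrac12$ the paper simply cites Good's quantitative bound $\dim_H(H_n)>\tfrac12+\tfrac{1}{2\log(n+2)}$ (for $n>20$) together with the nesting $H_{n+1}\subset H_n$; your pressure argument --- that the finite truncation $\{1/(k+x):n\le k\le N\}$ has positive pressure at $s=\tfrac12$ because $\sum_{k=n}^N k^{-1}$ diverges, forcing the Bowen zero above $\tfrac12$ --- is self-contained and is essentially the mechanism behind Good's bound, so it is a fine substitute. For $\dim_H(C_n)<1$ the paper deliberately avoids treating $C_n$ as an open system: it enlarges $C_n$ to the set $K_n$ where the block $1^n$ is forbidden only at positions aligned on multiples of~$n$, so that $K_n$ is the limit set of the full-branched IFS of inverse branches of $T_1^n$ with exactly one branch (the one containing the fixed point~$g$) deleted, and then Theorem~4.7 of Mauldin--Urba\'nski gives the strict dimension drop immediately; this sidesteps the escape-rate/spectral-gap estimate for the transfer operator with hole $[1^n]$ that you correctly identify as the principal technical burden of your version. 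Your route is legitimate (the hole is a cylinder of positive Gauss measure and the open-system Bowen formula applies), but it is substantially heavier machinery for the same strict inequality. For $\lim_n\dim_H(C_n)=1$ the paper identifies $\bigcup_n C_n$ with the set $BAD(g)$ of points whose $T_1$-orbit stays away from~$g$, quotes that this set is winning (hence of full Hausdorff dimension) and uses countable stability of Hausdorff dimension along the increasing union $C_n\subset C_{n+1}$; your inner approximation by bounded-alphabet subshifts avoiding $1^n$, combined with Jarn\'{\i}k/Hensley, also works, but the ``continuity of Bowen's zero'' as $n\to\infty$ is asserted rather than proved and is the one step that deserves an explicit argument (e.g., an injective insertion of a digit $2$ every $n-1$ places shows the constrained partition function at exponent $s$ is at least $c^{sm/(n-1)}$ times the unconstrained one, so the pressures, and hence their zeros, converge). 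In short: your proof buys self-containedness at the cost of invoking open-system thermodynamic formalism, while the paper buys brevity by leaning on three external results (Good, Mauldin--Urba\'nski, and the winning property of $BAD(g)$).
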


\begin{proof}
In~\cite{G} it is shown that $\dim_H(H_n)>\frac{1}{2}+\frac{1}{2\log(n+2)}$ for $n>20$. Since $H_{n+1}\subset H_n$ we find that $\dim_H(H_n)>\frac{1}{2}$ for all $n\in\mathbb{N}$.

Let $BAD(g)= \{x\in [0,1]: g\not\in \overline{\{T_1^k(x) :k\in\mathbb{N}\}}\}$. 
Then~\cite{HY} gives us that $BAD(g)$ is $\alpha$-winning and therefore it has Hausdorff dimension~1. 
On the other hand, it is not difficult to check that  $BAD(g)=\bigcup_n  C_n$ and since $C_n$ is an increasing sequence of sets we get
\[
1=\sup_n \dim_H(C_n)= \lim_{n\to \infty}  \dim_H(C_n). 
\]

To prove that $\dim_H(C_n)<1$, let us point out that $C_n\subset K_n$ where 
\[
K_n: = \{x\in[0,1]\,:\,  x=[0;a_1,a_2,\ldots] \text{ and } a_{kn+1} \cdots a_{kn+n} \neq 1 \text{ for all } k\ge 0 \}.
\]
It is then sufficient to show that $\dim_H(K_n)<1$ for all $n \ge 1$.

Note that the set $K_n$ can be easily described in terms of the map $T:=T_1^n$, which is a map with countable many full branches; indeed, $K_n$ is the set of points whose iterates under $T$ never enter the domain of the branch of $T$ containing the fixed point~$g$, i.e., the interval between $F_{n-1}/F_n$ and $F_n/F_{n+1}$, where $F_n$ are the Fibonacci numbers $F_0=F_1=1$, $F_{n+1}=F_n+F_{n-1}$.

Equivalently the set $K_n$ can also be described as the limit set of an Iterated Function System induced by the family~$\mathcal{S}'$ of all  but one inverses branches of~$T$; this setting allows us to use Theorem 4.7 of \cite{MU96} to conclude that the Hausdorff dimension of the limit set $K_n$ induced by the family $\mathcal{S}'$ is strictly smaller than the dimension of the IFS generated by the family $\mathcal{S}$ of {\bf all} inverse branches of~$T$, yielding that $\dim_H(K_n)<1$; for a more detailed discussion of the general results of \cite{MU96} in the setting of continued fractions we also suggest  the paper \cite{MU99}.
\end{proof}

We can now prove Theorem~\ref{th:aroundg}.

\begin{proof}[Proof of Theorem~\ref{th:aroundg}]
Define $f_a:[0,1]\rightarrow[0,1]$ by $f_a(x)=\frac{1}{a+x}$ with $a\in\mathbb{N}$, and let $\alpha \in \hat{C_n}:=f_1^{2n+3}\circ f_2(C_{2n+1})$.
Then the RCF expansion of $\alpha$ starts with exactly $2n+3$ ones and has no other occurrence of $2n+1$ consecutive ones.
Therefore, we have $\alpha \in (g,1)$.
Suppose that $\alpha \notin \EIT$. 
Then we have by Theorem~\ref{thm:charactEIT} some $k \ge 2$ such that $T_1^k(\alpha) \in (\frac{1}{1+\alpha},\alpha) \cup (1-\alpha,\frac{\alpha}{1+\alpha})$.
This implies that $T_1^{k+1}(\alpha) \in (T_1(\alpha),\alpha) \cup (T_1(\alpha), T_1^2(\alpha))$, hence the RCF expansion of $T_1^{k+1}(\alpha)$ starts with $2n+1$ ones, contradicting that $\alpha\in \hat{C}_n$.
Hence we have 
\[
\hat{C}_n \subset \EIT \cap [g,1].
\] 

Since $f_a$ is bi-Lipschitz for all $a\in\mathbb{N}$, the same is true for any finite composition of these maps. Since bi-Lipschitz maps preserve the Hausdorff dimension, we have $\dim_H(\hat{C_n})=\dim_H(C_{2n+1})$.
Then it follows from Proposition~\ref{p:FnCn} that
\[
\dim_H(\EIT) \ge \dim_H\bigg(\bigcup_{n\ge1} \hat{C}_n\bigg)=\sup_{n\ge1} \dim_H(\hat{C}_n)=\sup_{n\ge1} \dim_H(C_{2n+1})=1,
\]
thus $\dim_H(\EIT)=1$. 
For any $\delta>0$, we have that $\hat{C}_n \subset (g, g+\delta)$ for all sufficiently large~$n$, and so $\dim_H(\EIT \cap (g,g+\delta))=1$.

Let now $n$ be such that $\frac{F_{2n}}{F_{2n+1}} \le g+\delta$. 
For $\alpha \in\EIT \cap (g+\delta,1)$, we have $T_1^k(\alpha) \notin (\frac{1}{1+\alpha}, \alpha)$ for all $k \ge 0$, hence the RCF expansion of $\alpha$ contains no $2n+1$ consecutive ones, thus $\EIT \cap (g+\delta,1) \subset C_{2n+1}$. 
By Proposition~\ref{p:FnCn}, this implies that $\dim_H(\EIT \cap (g+\delta,1)) < 1$. 
\end{proof}

To get results on the Hausdorff dimension around a point $b\in \EIT\cap \mathbb{Q}$ we need more insight in the behavior around such a point. We establish this with the following lemma.

\begin{lemma}\label{l:aroundbad}
If $\alpha_0\in \EIT\cap \QQ\cap (g,1)$ has RCF expansion $\alpha_0=[0;1,a_2,...,a_k]$, then there is a $c \in\mathbb{N}$ such that $[0;1,a_2,...,a_k,c, c_1,c_2, \dots] \in \EIT$ for all finite or infinite sequences $c_1,c_2,\ldots$ with $c_j \ge a_2+1$ for all $j \ge 1$.
\end{lemma}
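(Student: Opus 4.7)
My plan is to verify the Gauss-map characterization~\eqref{e:bifurc1} of $\EIT \cap [g,1]$ directly for $\alpha := [0;1, a_2, \ldots, a_k, c, c_1, c_2, \ldots]$. The $T_1$-orbit is explicit: for $1 \le n \le k-1$, $T_1^n(\alpha) = [0;a_{n+1}, \ldots, a_k, c, c_1, \ldots]$; $T_1^k(\alpha) = [0;c, c_1, \ldots]$; and $T_1^{k+j}(\alpha) = [0;c_j, c_{j+1}, \ldots]$ for $j \ge 1$. I split $n \ge 2$ into three ranges and check both conditions of~\eqref{e:bifurc1} in each.

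For $2 \le n \le k-1$, I appeal to continuity. Since $\alpha_0 \in \EIT$, the rational $T_1^n(\alpha_0) = [0;a_{n+1},\ldots,a_k]$ avoids both forbidden intervals (with the parity of $P_n(\alpha_0)$ accounted for), and it cannot coincide with any of the thresholds $\tfrac{1}{\alpha_0+1}$, $\alpha_0$, $1-\alpha_0$, $\tfrac{\alpha_0}{\alpha_0+1}$: such equalities would force RCF expansions of incompatible lengths (for instance $\tfrac{1}{\alpha_0+1} = [0;1,1,a_2,\ldots,a_k]$ has length $k+1$, while $T_1^n(\alpha_0)$ has length $k-n$). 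Choosing $c$ large enough then places $\alpha$ close enough to $\alpha_0$ that $T_1^n(\alpha)$ lies on the same side of every threshold as $T_1^n(\alpha_0)$, so that $P_n(\alpha) = P_n(\alpha_0)$ and both conditions in~\eqref{e:bifurc1} transfer from $\alpha_0$ to $\alpha$ at these indices.

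For $n = k$, taking $c$ large enough (say $c > \lceil 1/(1-\alpha_0)\rceil$) yields $T_1^k(\alpha) \le 1/c < 1-\alpha < \tfrac{1}{\alpha+1}$, so both non-inclusions hold trivially and $P_k(\alpha) = 0$. For $n \ge k+1$, the crude bound $T_1^n(\alpha) \le 1/c_{n-k} \le 1/(a_2+1) \le 1/2 < \tfrac{1}{\alpha+1}$ disposes of the first forbidden interval. For the second, I would observe that $1 - \alpha = [0;a_2+1, a_3, \ldots, a_k, c, c_1, c_2, \ldots]$ (from the identity $1/(1-\alpha) = 1 + 1/T_1(\alpha)$) and compare it digit by digit with $T_1^n(\alpha) = [0;c_{n-k}, c_{n-k+1}, \ldots]$ via the alternating-parity rule of regular continued fractions. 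The hypothesis $c_{n-k} \ge a_2+1$ should then give $T_1^n(\alpha) \le 1-\alpha$; combined with $P_k(\alpha) = 0$, this propagates $P_n(\alpha) = 0$ by induction on $n$, making condition~(b) vacuous.

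The main obstacle is this last RCF comparison: when $c_{n-k} = a_2+1$ the first digits of the two expansions agree, and one must recurse on subsequent digits $c_{n-k+1}, c_{n-k+2}, \ldots$ against the tail $a_3, a_4, \ldots$ of $1-\alpha$'s expansion, invoking the blanket hypothesis $c_j \ge a_2+1$ for every $j$ together with the structural constraints coming from $\alpha_0 \in \EIT$. Some additional care is also needed in the continuity step to ensure that the choice of $c$ can be made uniformly in all admissible sequences $c_1, c_2, \ldots$ simultaneously (the required largeness is controlled by $\alpha_0$ alone), and to verify rigorously that the four threshold-equalities for $n \in [2,k-1]$ never occur in the rational orbit of $\alpha_0$.
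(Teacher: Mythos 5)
Your overall route --- checking the Gauss-map characterization \eqref{e:bifurc1} of Theorem~\ref{thm:charactEIT} along the explicit $T_1$-orbit of $\alpha=[0;1,a_2,\ldots,a_k,c,c_1,c_2,\ldots]$ --- is exactly the paper's, and your handling of the indices $2\le n\le k-1$ and $n=k$ is fine: the threshold equalities you worry about are indeed impossible (the canonical expansion of $T_1^n(\alpha_0)$ is strictly shorter than those of $\alpha_0$, $\frac{1}{\alpha_0+1}$, $\frac{\alpha_0}{\alpha_0+1}$, $1-\alpha_0$, with a small case distinction according to whether $a_k=1$), and since the first $k+1$ partial quotients of $\alpha$ are $1,a_2,\ldots,a_k,c$, the distance from $\alpha$ to $\alpha_0$ is controlled by $c$ alone, uniformly in the tail. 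The genuine gap is precisely the step you flag as the main obstacle, and it cannot be filled: under the stated hypothesis $c_j\ge a_2+1$ the inequality $T_1^n(\alpha)\le 1-\alpha$ for $n>k$ is false for some admissible tails, and with it the desired conclusion. Note that the paper's own proof silently strengthens the hypothesis, taking $c=\max(a_2,\ldots,a_k)+2$ and $c_j\ge a_2+2$; then $T_1^n(\alpha)\le\frac{1}{a_2+2}\le 1-\alpha<\frac{1}{\alpha+1}$ settles all $n\ge k$ in one line, with no parity bookkeeping at all.

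To see that your tail comparison must break, take $\alpha_0=\frac23=[0;1,2]\in\EIT$ (so $k=2$, $a_2+1=3$) and, for any prescribed $c\ge 3$, the admissible tail $c_1=3$, $c_2=M$, $c_j=3$ for $j\ge3$. Since $\alpha>[0;1,2,c,3]=\frac{6c+5}{9c+6}$, we get $1-\alpha<\frac13-\frac{1}{3(3c+2)}$, a bound independent of $M$, whereas $T_1^3(\alpha)=[0;3,M,3,\ldots]>\frac{M}{3M+1}\to\frac13$ as $M\to\infty$. Hence for $M$ large enough, $1-\alpha<T_1^3(\alpha)<\frac13<\frac25<\frac{\alpha}{\alpha+1}$, so $T_1^3(\alpha)\in(1-\alpha,\frac{\alpha}{\alpha+1})$; moreover $P_3(\alpha)=1$ is odd, because $T_1^3(\alpha)>1-\alpha$, $T_1^2(\alpha)=[0;c,3,M,\ldots]<\alpha$, and $T_1^2(\alpha)<1-\alpha=[0;3,c,3,M,\ldots]$ (compare first digits for $c\ge4$, third digits for $c=3$). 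By \eqref{e:bifurc1}, this $\alpha$ is not in $\EIT$, however large $c$ was chosen; the failure occurs exactly in the configuration you feared, $c_{n-k}=a_2+1$ followed by a digit dwarfing $c$, and no induction on $P_n$ can rescue it since $P_n$ does turn odd there. The repair is to prove the lemma with $c_j\ge a_2+2$ (as the paper's proof in effect does); that weaker statement is all the application in Theorem~\ref{th:bifrat} needs, since there the lemma is applied to tails from $H_n$ with $n$ arbitrarily large.
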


\begin{proof}
Let $\alpha_0\in \EIT\cap \QQ\cap (g,1]$ and $\alpha = [0;1,a_2,...,a_k,c, c_1,c_2, \dots]$ with $c = \max(a_2,\dots,a_k)+2$, $c_j \ge a_2+2$ for all $j \ge 1$. 
By Theorem~\ref{thm:charactEIT}, we have to show that $T_1^j(\alpha) \notin (\frac{1}{1+\alpha},\alpha)$ for all $j \ge 2$ and $T_1^j(\alpha) \notin (1-\alpha,\frac{\alpha}{1+\alpha})$ for all $j \ge 2$ such that $P_j(\alpha)$ is odd. 
By \eqref{e:conv}, we have $T_1^j(\alpha) \le \frac{1}{a_2+2} \le 1-\alpha$ for all $j \ge k$. 
For $j<k$, we get from $\max(a_2,\dots,a_k) < c < \infty$ that $T_1^j(\alpha) \in (\frac{1}{1+\alpha},\alpha)$ if and only if $T_1^j(\alpha_0) \in (\frac{1}{1+\alpha_0},\alpha_0)$, $T_1^j(\alpha) \notin (1-\alpha,\frac{\alpha}{1+\alpha})$ if and only if $T_1^j(\alpha_0) \notin (1-\alpha_0,\frac{\alpha}{1+\alpha_0})$, and $P_j(\alpha) = P_j(\alpha_0)$. 
Therefore, $\alpha_0\in \EIT$ implies that $\alpha \in \EIT$. 
\end{proof}

Now Theorem~\ref{th:bifrat} follows almost directly.

\begin{proof}[Proof of Theorem~\ref{th:bifrat}]
The fact that there are infinitely many rationals in $\EIT$ is given by the fact that $\frac{n-1}{n}\in \EIT$ for all $n \geq 3$. 
Furthermore, $\EIT\cap \QQ\cap (g,1]$ has no isolated points since in the proof Lemma~\ref{l:aroundbad} one can take  $c$ arbitrarily large. 
For the dimensional result, we reason as follows. 
The composition  $f_{a_1}\circ\ldots\circ f_{a_k}$ is bi-Lipschitz. 
Furthermore, from Lemma~\ref{l:aroundbad} it follows that $f_{a_1}\circ\ldots\circ f_{a_k}(H_n) \subset  \EIT$ for all sufficiently large~$n$. 
Using Proposition~\ref{p:FnCn}, the theorem now follows.
\end{proof}

\section{Remarks and open questions}

\begin{enumerate}
\item Using the techniques of \cite{T} one can prove that  the entropy is H\"older continuous also for the family (TI); it is natural to ask whether it is Lipschitz continuous. Let us mention that the answer to this  question is negative for the cases (N) and (KU), but this is due to the failure of  the Lipschitz property  at points accumulated by matching intervals with arbitrarily high matching index.
\item  Is the entropy weakly decreasing on $[g,1]$? We believe the answer is affirmative, but we cannot rule out some devil staircase pathology (unless we prove that the entropy is Lipschitz, or at least absolutely continuous).
Recently Nakada~\cite{N19} proved that the entropy in case (N) attains its maximum on the central plateau,  the same methods might be useful to deal also with this question.
\item  Can one characterize the isolated points of $\EE$?
The set $\EE$ certainly contains countable many isolated points, for instance  for each $a\geq 2$  the value $\gamma_a:=[0;\overline{1,a,1}]$   is the separating element between the two adjacent matching intervals $(\beta_a,\gamma_a)$ and $(\gamma_a,\eta_a)$ with $\beta_a:=[0;\overline{1,a,1,1,a}]$ and
 $\eta_a:=[0;\overline{1,a}]$. But is  there some countable chain of adjacent intervals as it was observed for the family (N) (see \textsection 3.3 in \cite{CT1})? 
\item Are there non isolated points of $\EIT$ at which the local Hausdorff dimension of $\EIT$ falls in the open interval $(0,1/2)$?
\item Let  $\EIT_0$ denote the nonisolated points of $\EIT$; by Theorem~\ref{th:bifrat} we know that $\EIT\cap \QQ\subset \EIT_0$; is it true that $\overline{\EIT\cap \QQ}=\EIT_0$?
We think that the answer to this last question is affirmative (which, by Theorem \ref{th:bifrat}, would imply that the answer to the previous question is negative). 
\end{enumerate}

\section*{Acknowledgements}
The first author acknowledges the support of MIUR PRIN Project Regular and stochastic behavior in dynamical systems nr.\ 2017S35EHN and of the GNAMPA group of the “Istituto Nazionale di Alta Matematica” (INdAM).
The third author was supported by the Agence Nationale de la Recherche through the project Codys (ANR--18--CE40--0007).

\bibliographystyle{alpha}
\bibliography{articleito}
\end{document}